\title{Einstein Tori and Crooked Surfaces}  
\author[Burelle]{ Burelle, Jean-Philippe}
\author[Charette]{ Charette, Virginie}
\author[Francoeur]{Francoeur, Dominik}
\author[Goldman]{Goldman, William M.}
\address{{\it Burelle:\/} Universit\'e de Sherbrooke, Sherbrooke, Canada.}
\email[Burelle]{j-p.burelle@usherbrooke.ca}
\address{{\it Charette:\/} Universit\'e de Sherbrooke, Sherbrooke, Canada.}
\email[Charette]{virginie.charette@usherbrooke.ca}
\address{{\it Francoeur:\/} Universit\'e de Gen\`eve, Geneva, Switzerland.}
\email[Francoeur]{dominik.francoeur@unige.ch}
\address{{\it Goldman:\/} University of Maryland, College Park, MD 20742, USA.}
\email[Goldman]{wmg@math.umd.edu}
\thanks{Burelle, Charette and Francoeur gratefully acknowledge partial support from the Natural Sciences and Engineering Research Council of Canada.
We also gratefully acknowledge partial support from the US National
Science Foundation, in particular grants  DMS 1406281 and, especially
DMS 1107367 “Research Networks in the Mathematical Sciences: Geometric
structures And Representation varieties” (the GEAR Network).}
\date{\today}
\subjclass[2010]{Primary 53C50, Secondary 20H10}
\numberwithin{equation}{section}
\newtheorem{thm}{Theorem}
\newtheorem*{thm*}{Theorem}
\newtheorem{lem}[thm]{Lemma}
\newtheorem{cor}[thm]{Corollary}
\newtheorem{prop}{Proposition}
\newcommand{\Ein}[1]{\mathsf{Ein}^{#1}}
\newcommand{\R}{\mathbb{R}}
\newcommand{\ldot}[2]{#1 \cdot #2}
\newcommand{\Einth}{\Ein{3}}
\newcommand{\Einn}{\Ein{n}}
\newcommand{\Rnt}{\R^{n,2}}
\newcommand{\W}{\mathsf{W}}
\newcommand{\V}{\mathsf{V}}
\newcommand{\N}{\mathscr{N}}
\renewcommand{\O}{\mathsf{O}} 
\renewcommand{\v}[1]{\mathbf{#1}}
\newcommand{\Proj}{\mathbb{P}}
\newcommand{\AdS}{\mathsf{AdS}}
\newcommand{\Det}{\mathsf{Det}}
\newcommand{\Adj}{\mathsf{Adj}}
\newcommand{\E}{\mathsf E}
\newcommand{\Ph}[1]{\mathsf{Pho}^#1}
\newcommand{\Li}{\mathscr L}
\newcommand{\Ht}{\mathsf{H}^2}
\newcommand{\graph}{\mathsf{graph}}
\newcommand{\Ker}{\mathsf{Ker}}
\newcommand{\Isom}{\mathsf{Isom}}
\newcommand{\Tr}{\mathsf{Tr}}
\newcommand{\Xx}{{\mathscr X}(x_0)}
\newcommand{\Hh}{\mathscr H}
\newcommand{\Yy}{\mathscr Y}
\newcommand{\SL}{\mathsf{SL}}
\newcommand{\SpfR}{\mathsf{Sp}(4,\R)}
\newcommand{\Sp}{\mathsf{Sp}}
\newcommand{\PSL}{\mathsf{PSL}}
\newcommand{\En}{\E^{n-1}}
\newcommand{\Enoo}{\E^{n-1,1}}
\newcommand{\Lag}{\mathsf{Lag}}
\begin{document}

\begin{abstract}
In hyperbolic space, the angle of intersection and distance classify pairs of totally geodesic hyperplanes. A similar algebraic invariant classifies pairs of hyperplanes in the Einstein universe. In dimension $3$, symplectic splittings of a $4$-dimensional real symplectic vector space model Einstein hyperplanes and the invariant is a determinant. The classification contributes to a complete disjointness criterion for crooked surfaces in the $3$-dimensional Einstein universe.
\end{abstract}
\maketitle

\section{Introduction}
Polyhedra bounded by {\em crooked surfaces} form fundamental domains in the Einstein Universe for Lorentzian Kleinian groups (\cite{MR3231611}, \cite{MR2182704}).  Crooked surfaces are assembled from pieces of certain hypersurfaces, namely light cones and Einstein tori.  This motivates our study of  these hypersurfaces, and how they intersect.

The theory of \emph{crooked planes}, in the context of Minkowski space, has been very successful in understanding and classifying discrete groups of affine transformations acting properly on $\mathbb{R}^3$ (\cite{MR3262435},\cite{MR2003687},\cite{MR3569564} and \cite{MR3480555}). Crooked planes are piecewise linear surfaces in Minkowski $3$-space which bound fundamental domains for proper affine actions. In 2003, Frances~\cite{MR1989275} studied the boundary at infinity of these quotients of Minkowski space by introducing the \emph{conformal compactification} of a crooked plane. In this paper, we call conformally compactified crooked planes \emph{crooked surfaces}.

Recently, Danciger-Gu\'eritaud-Kassel~\cite{DGKfundamentald} have adapted crooked planes to the negatively curved anti-de Sitter space. In a note shortly following the DGK paper, Goldman~\cite{G14} unified crooked planes and anti-de Sitter crooked planes. More precisely, Minkowski space and anti-de Sitter space can be conformally embedded in the Einstein universe in such a way that crooked planes in both contexts are subsets of a crooked surface.

A crooked surface is constructed using three pieces : two wings, and a stem. The wings are parts of light cones, and the stem is part of an Einstein torus. In order to understand the intersection of crooked surfaces, we first focus on Einstein tori. Our first result classifies their intersections.
\begin{thm}
\label{ToriIntersection}
Let $T_1,T_2\subset \Einth$ be Einstein tori.
Suppose that $T_1\neq T_2$.
Then $T_1\cap T_2$ is nonempty, and exactly one of the following possibilities
occurs:
\begin{itemize}
\item $T_1\cap T_2$ is a union of two photons which intersect in exactly one point.
\item $T_1\cap T_2$ is a spacelike circle and the intersection is transverse.
\item $T_1\cap T_2$ is a timelike circle and the intersection is transverse.
\end{itemize}
\noindent
A single geometric invariant $\eta(T_1,T_2)$, related to the Maslov index, distinguishes the three cases.
\begin{itemize}
  \item $\eta(T_1,T_2)=1$ if and only if  $T_1\cap T_2$ is a union of two photons which intersect in exactly one point,
  \item $\eta(T_1,T_2) > 1$ if and only if $T_1\cap T_2$ is spacelike, and
  \item $\eta(T_1,T_2) < 1$ if and only if $T_1\cap T_2$ is timelike.
\end{itemize}
\end{thm}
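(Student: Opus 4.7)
The plan is to translate the statement into symplectic linear algebra using the correspondence (established earlier) between $\Einth$ and the Lagrangian Grassmannian of $(\R^4,\omega)$, and between Einstein tori and symplectic splittings $\R^4 = V\oplus W$. Under this identification a Lagrangian $L$ lies on the torus $T_{(V,W)}$ iff $L = (L\cap V)\oplus(L\cap W)$, so $T_{(V,W)}\cong\R\mathsf{P}(V)\times\R\mathsf{P}(W)$, and the two rulings of $T_{(V,W)}$ by photons correspond to the two factors of this product.

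I would first fix the splitting $(V_1,W_1)$ and, in the generic chart where $V_2\cap W_1 = 0$, write $V_2$ as the graph of a linear map $A:V_1\to W_1$; its symplectic complement $W_2$ is then determined by $A$ and itself a graph $B:W_1\to V_1$. The stabilizer $\Sp(V_1)\times\Sp(W_1)$ of $(V_1,W_1)$ acts on $A$ and $\det A$, computed via the symplectic volume forms on $V_1,W_1$, is a complete orbit invariant. Bringing $A$ into diagonal normal form $\mathrm{diag}(\alpha,\beta)$ in adapted symplectic bases, I would identify the invariant in the theorem as $\eta(T_1,T_2) = 1+\det A$; its interpretation via the Maslov index comes from writing it in terms of the determinant of $I-BA$.

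Next I would compute $T_1\cap T_2$ in the angle coordinates $(\theta,\phi)\in\R\mathsf{P}(V_1)\times\R\mathsf{P}(W_1)$ on $T_1$, writing $L=\langle p(\theta),q(\phi)\rangle$ with $p(\theta)\in V_1$, $q(\phi)\in W_1$. The condition $L\in T_2$, equivalently $L\cap V_2\ne 0$, is encoded by the vanishing of a single $4\times 4$ determinant, which reduces to an algebraic equation in $(\theta,\phi)$. When $\det A=0$, this equation factors into two linear pieces defining two photons in $T_1$, and these meet at the unique Lagrangian spanned by the generators of the two nonzero intersections among $V_i\cap V_j$, $V_i\cap W_j$. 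When $\det A\ne 0$, the solution set is a smoothly embedded closed curve; pairing its tangent vector with the induced conformal Lorentz metric on $T_1$ (represented in these coordinates by $d\theta\,d\phi$, with photon directions $\theta=\mathrm{const}$ and $\phi=\mathrm{const}$) yields a squared length of the same sign as $\det A$, producing a spacelike circle when $\eta>1$ and a timelike circle when $\eta<1$. Transversality of the intersection then follows from the nonvanishing of the differential of the defining equation along the circle.

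The main obstacle is twofold: first, giving an intrinsic definition of $\eta$ so that the three threshold cases are exactly $\eta=1$, $\eta>1$, $\eta<1$ with the correct orientation, which requires matching the symplectic determinant to the conformal causal structure on $\Einth$ through the Maslov cocycle and verifying that the normalization is invariant under swapping $V_i\leftrightarrow W_i$ inside each splitting; and second, treating the non-generic configurations where $V_2\cap W_1\ne 0$, in which the graphing chart above fails. These exceptional positions can be handled either by a parallel computation in a different chart of the Grassmannian of symplectic splittings or by a continuity argument, and do not introduce new qualitative behavior.
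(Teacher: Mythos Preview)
Your approach is genuinely different from the paper's and considerably more laborious. The paper does not use the symplectic model to prove this theorem; that model is developed only afterwards, as a reinterpretation. Instead the argument takes place entirely in the orthogonal model $\W\cong\R^{3,2}$: each Einstein torus $T_i$ is the projectivized null cone of $\v{s}_i^\perp$ for a unit spacelike vector $\v{s}_i$, so $T_1\cap T_2$ is the projectivized null cone of the $3$-dimensional subspace $\langle\v{s}_1,\v{s}_2\rangle^\perp$. The signature of this $3$-plane is read off from the signature of the span $\langle\v{s}_1,\v{s}_2\rangle$, which is determined by whether $|\v{s}_1\cdot\v{s}_2|$ is $<1$, $=1$, or $>1$; the invariant is simply $\eta(T_1,T_2):=|\v{s}_1\cdot\v{s}_2|$. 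The three cases (signature $(1,2)$, degenerate of type $(+,-,0)$, signature $(2,1)$) immediately give the timelike circle, the pair of incident photons, and the spacelike circle. No normal forms, no coordinate charts on the torus, no case analysis of non-generic positions are needed.

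Your symplectic route can in principle be pushed through, but two points need care. First, your proposed formula $\eta=1+\det A$ does not agree with the paper's invariant: the paper later shows that for $T=\graph(f)$ one has $\eta=\dfrac{|1-\Det(f)|}{|1+\Det(f)|}$, so in particular $\Det(f)>0$ corresponds to $\eta<1$ (timelike), the opposite of what your sign analysis predicts. Second, the ``non-generic'' configurations you flag as an obstacle are not exceptional at all in the orthogonal model---they are absorbed automatically once one works with the normal vectors---so the need to treat them separately is an artifact of the chart you chose. What your approach would buy, if carried out, is an explicit parametrization of the intersection curve inside one of the tori; the paper's approach buys a three-line proof but no such parametrization.
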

We next show how to further interpret this result in the three-dimensional case. The Lagrangian Grassmannian in dimension $4$ is a model of the $3$-dimensional Einstein universe. The relationship between the two models was studied extensively in \cite{MR2436232}. We develop the theory of Einstein tori in the space of Lagrangians and characterize $\eta$ as the determinant of a linear map.

A simple consequence of Theorem \ref{ToriIntersection} is
\begin{cor}
Let $T_1,T_2$ be a pair
 Einstein tori. Then, $T_1\cap T_2$ is non-contractible as a subset of $T_1$ or $T_2$.
\end{cor}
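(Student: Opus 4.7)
The plan is to invoke Theorem~\ref{ToriIntersection} and verify non-contractibility case by case (the case $T_1 = T_2$ being trivial, since then $T_1 \cap T_2 = T_1$). In each case I will exhibit a loop inside $T_1 \cap T_2$ that is non-trivial in $H_1(T_1) \cong \Z^2$; by symmetry the same argument then applies inside $T_2$. The key model is that an Einstein torus is conformally the compactification of Minkowski $2$-space: topologically $T_i \cong S^1 \times S^1$, with null coordinates $(u,v)$ in which the conformal Lorentzian metric takes the form $du\,dv$. Its two null foliations $\{u = \mathrm{const}\}$ and $\{v = \mathrm{const}\}$ consist of photons, each leaf being a simple closed curve that represents one of the two $\Z$-factors of $\pi_1(T_i)$.

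In Case~1 (two photons meeting at a single point), the two photons must belong to \emph{different} null foliations of $T_i$: otherwise they would either coincide or be disjoint. Each is therefore a generator of $\pi_1(T_i)$, so each alone is already non-contractible in $T_i$, and hence so is their union.

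In Cases~2 and~3 (a transverse spacelike or timelike intersection circle $C$), the argument is causal. In the $(u,v)$-coordinates, a tangent vector $(a,b)$ is null iff $a = 0$ or $b = 0$; thus a closed spacelike or timelike curve in $T_i$ has both tangent coordinates nowhere vanishing along it. Composing the inclusion $C \hookrightarrow T_i$ with either projection $T_i \to S^1$ then yields a smooth map $S^1 \to S^1$ of nowhere-vanishing derivative---equivalently, a covering map of nonzero degree---forcing $[C] \in H_1(T_i)$ to be non-trivial. I do not anticipate any real obstacle here: the main content is identifying the Einstein torus with its flat conformal model and recognizing the photons as the leaves of its null foliations; once this is in hand, the argument in each case is essentially immediate.
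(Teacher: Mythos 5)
Your proof is correct and follows essentially the same strategy as the paper: reduce to the trichotomy of Theorem \ref{ToriIntersection} and check that each possible intersection (a pair of photons, a spacelike circle, a timelike circle) represents a nontrivial homotopy class in the Einstein torus $\Ein{2}\cong S^1\times S^1$. Where the paper merely asserts that ``a computation shows'' these classes are nontrivial, your product-of-photon-circles model with conformal metric $du\,dv$ and the nonzero-degree projection argument for non-null closed curves carry out that computation explicitly, so nothing is missing.
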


We use this corollary to prove a complete disjointness criterion for crooked surfaces,
generalizing the construction  in Charette-Francoeur-Lareau-Dussault~\cite{MR3231611}
and the criterion for disjointness of anti-de Sitter crooked planes in Danciger-Gu\'eritaud-Kassel~\cite{DGKfundamentald} :

\begin{thm}
  \label{surfacecriterion}
  Two crooked surfaces $C,C'$ are disjoint if and only if the four photons on the boundary of the stem of $C$ are disjoint from $C'$, and the four photons on the boundary of the stem of $C'$ are disjoint from $C$.
\end{thm}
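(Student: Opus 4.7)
The forward direction is immediate, since the four boundary photons of the stem of $C$ lie in $C$, and symmetrically for $C'$. For the converse, assume the hypothesis and suppose for contradiction that $C \cap C' \neq \emptyset$. Decomposing each crooked surface as the union of its stem and its two wings, any intersection falls into one of three kinds of pieces meeting: stem-stem, stem-wing, or wing-wing. I would treat these cases separately and, in each case, produce a point on a boundary photon of one surface that lies in the other, contradicting the hypothesis.

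The stem-stem case is where Theorem~\ref{ToriIntersection} and its Corollary enter, and is in my view the heart of the proof. Let $T, T'$ be the Einstein tori containing the stems of $C$ and $C'$ respectively. A short preliminary argument shows that $T \neq T'$: if the two tori coincided, any overlap of the stems would either force a boundary photon of one stem to lie in the other stem, or would make two of the boundary photons of the two stems coincide, already violating the hypothesis. Hence Theorem~\ref{ToriIntersection} applies, and its Corollary tells us that the connected component $\gamma$ of $T \cap T'$ containing any chosen point $p \in \mathrm{stem}(C) \cap \mathrm{stem}(C')$ is non-contractible in each torus. Since the stem of any crooked surface is a contractible quadrilateral region in its torus, bounded by its four boundary photons, $\gamma$ cannot be contained in either stem; therefore $\gamma \cap \mathrm{stem}(C) \cap \mathrm{stem}(C')$ is a nonempty proper closed subset of the connected set $\gamma$, and so it has a boundary point $q \in \gamma$. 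Then $q$ lies on a boundary photon of one of the two stems while still lying inside the other stem, and hence in the other crooked surface, contradicting the hypothesis.

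For the stem-wing and wing-wing cases I would apply the same boundary-crossing strategy to pairs of ambient hypersurfaces consisting of an Einstein torus and a light cone, or of two light cones. In each such case the relevant intersection curves are again photons (or other non-contractible curves) in the ambient hypersurfaces, and each wing is a contractible disk bounded by two of the boundary photons of its stem; these cases are moreover close to the situation treated in \cite{MR3231611} and \cite{DGKfundamentald}, whose disjointness arguments should transport through the conformal compactification. The main obstacle is the stem-stem case: it is the non-contractibility of $T \cap T'$ guaranteed by the new Corollary that justifies checking only the eight boundary photons, and without it the boundary-crossing trick would collapse.
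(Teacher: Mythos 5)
Your forward direction and your stem--stem case are essentially sound, and the latter is in substance the paper's Theorem~\ref{nostemonly}: the intersection of the two ambient Einstein tori is non-contractible in each torus (Corollary~\ref{Cor:IntersectionNonContractible}), while each stem is a union of (two) contractible quadrants, so the intersection curve must cross the stem boundary; your frontier-point refinement, with ``closed'' corrected to ``open'' (the stems are open in their tori) and ``a contractible quadrilateral region'' corrected to ``two disjoint contractible quadrants,'' does yield a point of a boundary photon of one surface lying in the closure of the other stem, hence in the other surface.

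The genuine gap is in the stem--wing and wing--wing cases, which is exactly where the paper's work lies and which you dispose of with a sketch. The topological strategy you propose does not transfer: a wing is \emph{not} a contractible disk. It is a union of entire photons through the vertex $P_\pm$, and each such photon is a non-contractible circle in the ambient light cone (a pinched torus); consequently a component of the intersection of the two ambient hypersurfaces can lie wholly inside a wing without ever reaching the two edge photons --- for instance, if an interior photon $[t\v{u}_++s\v{v}_+]$ of a wing of $C$ passes through the vertex of a wing of $C'$, it lies entirely in both light cones. Moreover, no analogue of Corollary~\ref{Cor:IntersectionNonContractible} for cone/cone or cone/torus pairs is proved (or available) here, and even granting one, an exit point of the intersection curve lies only in the ambient hypersurface of the other surface, not necessarily in the surface itself, so no contradiction with the hypothesis results. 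The paper instead treats all wing intersections algebraically: Lemma~\ref{lem:PhotonDisjointness} characterizes disjointness of a photon from a crooked surface by two inequalities, and in the proof of Theorem~\ref{CrookedSurfacesDisjointness} the product for a wing photon $t\v{u}_++s\v{v}_+$, $st\geq 0$, expands into four terms each of controlled sign --- crucially using the hypothesis in \emph{both} directions, since the cross terms require the boundary photons of $C'$ to miss $C$ and not merely those of $C$ to miss $C'$. Your proposal contains neither this interpolation argument nor any substitute, and your closing assessment inverts where the difficulty sits: given the Corollary, the stem--stem case is the easy part, and it is the wing analysis that carries the theorem.
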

The Lagrangian model of the Einstein universe allows us to express the condition in this theorem explicitly in terms of symplectic products. In that model, a pair of simple inequalities guarantee that a photon does not intersect a crooked surface.

Finally, we show that the criterion in theorem \ref{surfacecriterion} reduces to the criterion for disjointness of anti-de Sitter crooked planes from \cite{DGKfundamentald} when specializing to crooked surfaces adapted to an anti-de Sitter patch.

\section*{Notations and terminology}

If $V$ is a vector space, denote the associated
{\em projective space\/} $\Proj(V)$, defined as the space
of all $1$-dimensional linear subspaces of $V$.
If $\v{v}\in V$ is a nonzero vector in a vector space $V$,
then denote the corresponding point (projective equivalence class) in the projective space $\Proj(V)$ by $[\v{v}]\in\Proj(V)$. We call a real vector space endowed with a nondegenerate bilinear form a \emph{bilinear form space}.
If $\v{v}\in V$ is a nonzero vector in a bilinear form space $(V,\ldot{}{})$,
then
\[ \v{v}^\perp\ := \ \{ \v{w}\in V \mid \ldot{\v{v}}{\v{w}} = 0 \} \]
is a linear hyperplane in $V$.
When $\v{v}$ is non-null, then $\v{v}^\perp$ is nondegenerate and
defines an orthogonal decomposition
\[ V = \R \v{v} \oplus \v{v}^\perp. \]
More generally, if $S\subset V$ is a subset, then define
\[ S^\perp\ := \ \{ \v{w}\in V \mid \ldot{\v{v}}{\v{w}} = 0, \quad \forall \v{v}\in S\}.\]


\section{Einstein geometry}
This section briefly summarizes the basics of the geometry of $\Einn$.
For more details, see \cite{MR2182704,MR2436232,MR1989275,MR3231611}.

\subsection{The bilinear form space $\R^{n,2}$}

Let $\W$ be a $(n+2)$-dimensional real vector space endowed with a signature
$(n,2)$ symmetric bilinear form
\begin{align*}
\W \times \W & \longrightarrow \R \\
(\v{u},\v{v}) &\mapsto \ldot{\v{u}}{\v{v}}.
\end{align*}
Define the {\em null cone\/}:
\[\N(\W) := \{ \v{v}\in \W \mid \ldot{\v{v}}{\v{v}} = 0\}.\]
The \emph{Einstein universe} is the projectivization of $\N(\W)$ :
\[\Einn:=\Proj\big(\N(\W)\big).\]
$\Einn$ carries a natural conformal Lorentzian structure coming from the product on $\W$. More precisely,
smooth cross-sections of  the quotient map $\N(\W)\longrightarrow\Einn$ determine Lorentzian structures on $\Einn$.
Furthermore these Lorentzian structures are  conformally equivalent
to each other.

The orthogonal group $\O(n,2)$ of $\W$ acts conformally and transitively on
$\Einn$. In fact, the group of conformal automorphisms of $\Einn$ is exactly $\O(n,2)$.

\subsection{Photons and light cones}
A {\em photon\/} is the projectivization $\Proj(P)$ of a totally isotropic $2$-plane
$P\subset\W$. It corresponds to a lightlike geodesic in the conformal Lorentzian metric of $\Einn$.
A {\em spacelike circle\/} (respectively {\em timelike circle\/})
is the projectived null cone
$\Proj\left(\N(S)\right)$ of a subspace $S\subset\W$ which has signature
$(2,1)$ (respectively signature $(1,2)$).

A {\em light cone\/} is the projectivized null cone $\Proj \left(\N(H)\right)$
of a degenerate hyperplane $H\subset\W$.
Such a degenerate hyperplane $H = \v{n}^\perp$ for some null vector
$\v{n}\in\N(\W)$. In terms of the synthetic geometry of $\Einn$,
the light cone defined by $p = [\v{n}]\in\Einn$ equals the union of all
photons containing $p$. We will denote it by $\Li(p)$.

One can consider a different homogeneous space, the \emph{space of photons} of $\Einn$, denoted $\Ph{n}$. It admits a natural contact structure (see \cite{MR2436232}) in which the photons in a lightcone form a Legendrian submanifold. The contact geometry of photon space is intimately related to the conformal Lorentzian geometry of the Einstein universe. This relation stems from the incidence relation between the two spaces. We say that a point $p\in \Einn$ is \emph{incident} to a photon $\phi\in\Ph{n}$ whenever $p\in\phi$. By extension, two points $p,q\in\Einn$ are called incident when they are incident to a common photon, and two photons $\phi,\psi\in\Ph{n}$ are called incident when they intersect in a common point.

\subsection{Minkowski patches}
\label{minkowski_patches}
The complement in $\Einn$ of a light cone is a {\em Minkowski patch.\/}
Its natural structure is {\em Minkowski space $\Enoo$,\/}
an affine space with a parallel Lorentzian metric.
Any geodesically complete simply-connected flat Lorentzian manifold
is isometric to $\Enoo$.
As such it is the model space for flat Lorentzian geometry.

Following \cite{MR2436232}, for $\W$ we use quadratic form of signature $(n,2)$ given by
\[\ldot{\v{v}}{\v{v}} := v_1^2 + v_2^2 + \dots + v_{n-1}^2 - v_n^2 - v_{n+1}v_{n+2}\] and work in the embedding of Minkowski space
\begin{align}\label{eq:Embedding}
\Enoo &\longrightarrow \Ein{n} \notag \\
\bmatrix \v{v} \\ v^n \endbmatrix &\longmapsto
\bmatrix \v{v} \\ v^n \\ \Vert \v{v}\Vert^2 - (v^n)^2  \\ 1\endbmatrix.
\end{align}
In the expression above,
\[ \v{v} := \bmatrix v^1 \\ \vdots \\ v^{n-1} \endbmatrix \ \in\ \En \]
is a vector in Euclidean space with Euclidean norm
$\Vert \v{v}\Vert$, and the Lorentzian norm in $\Enoo$  is:
\[ (\v{v}, v^n) \longmapsto  \Vert\v{v}\Vert^2 - (v^n)^2. \]

The complement of this embedding of $\Enoo$ is a light cone, and we will denote its vertex by $p_\infty$. This vertex is called the \emph{improper point} in \cite{MR2436232}, and its coordinates in a basis as above are:
\[ p_\infty \longleftrightarrow \bmatrix \v{0} \\ 0 \\ 1 \\ 0 \endbmatrix.\]
The closure in $\Einn$ of every non-null geodesic $\gamma$ in $\Enoo$ contains $p_\infty$
and the union $\gamma \cup \{p_\infty\}$ is a spacelike circle
or a timelike circle according to the nature of $\gamma$.
Conversely every timelike or spacelike circle which
contains $p_\infty$ is the closure of a timelike or spacelike geodesic in $\Enoo$.

The light cone of a point which is not $p_\infty$, but belongs to its light cone, intersects the
Minkowski patch $\Enoo$ in an affine hyperplane
upon which the Lorentzian structure on $\Enoo$ restricts to a field of degenerate
quadratic forms,  that is,  a {\em null hyperplane.}

If we choose an origin $p_0$ for a Minkowski patch, then we get an identification
of the patch with a Lorentzian vector space. The trichotomy of vectors into timelike,
spacelike and lightlike has an intrinsic interpretation with respect to $p_0$ and $p_\infty$:

A point is :
\begin{enumerate}
\item timelike if it lies on some timelike circle through $p_0$ and $p_\infty$,
\item spacelike if it lies on some spacelike circle through $p_0$ and $p_\infty$, and
\item lightlike if it lies on a photon through $p_0$.
\end{enumerate}

One and only one of these three happens for every point in the Minkowski patch.

\subsection{Einstein hyperplanes}
\label{sec:EinsteinHyperplanes}
An {\em Einstein hyperplane\/}  $H$ corresponds to a linear hyperplane $ \ell^\perp \subset\Rnt$ orthogonal to a spacelike line $\ell\subset\Rnt$.
A linear hyperplane $\ell^\perp$ is conveniently described by a
{\em normal vector $\v{s}\subset\ell$, \/} which we may assume satisfies
$\ldot{\v{s}}{\v{s}} = 1$. In that case $\v{s}$ is determined up to multiplication by
$\pm 1$.

The hyperplane $\v{s}^\perp$ is a bilinear form space isomorphic to
$\R^{n-1,2}$ and its projectivized null cone is a model for $\Ein{n-1}$.
In dimension $n=3$, an Einstein hyperplane is homeomorphic to a $2$-torus
$S^1\times S^1$ so we will call it an {\em Einstein torus}.
Under the embedding \eqref{eq:Embedding},
an Einstein hyperplane which passes through the point $p_\infty$
meets the Minkowski patch $\Enoo$ in an affine hyperplane upon which the Lorentzian structure on $\Enoo$ restricts to a Lorentzian metric, that is, a {\em timelike\/} hyperplane.

Since an Einstein torus is a totally geodesic embedded copy of $\Ein{2}$, it has a pair of natural foliations by photons. This is because the light cone of a point in $\Ein{2}$ is a pair
of photons through that point.
As described in Goldman \cite{G14} for $n=3$,
the complement of an Einstein hyperplane has the natural structure of the
double covering of {\em anti-de Sitter space.} This identification is presented in more detail
in section \ref{sec:AdS}.

\section{Pairs of Einstein hyperplanes}
The purpose of this section is to define the invariant $\eta\ge 0$ characterizing
pairs of hyperplanes in $\Einn$ and to prove theorem \ref{ToriIntersection}. We describe the moduli space of equivalence
classes of pairs, and reduce to the case $n=3$.
Then \S\ref{sec:Symplectic} reinterprets $\Einth$ in terms of symplectic geometry
using the local isomorphism $\SpfR \longrightarrow\O(3,2)$.

\subsection {Pairs of positive vectors}
A linearly independent pair of two unit-spacelike vectors $\v{s}_1, \v{s}_2$
spans a $2$-plane $\langle \v{s}_1,\v{s}_2\rangle \subset \W$ which is:
\begin{itemize}
\item Positive definite $\Longleftrightarrow$ $\vert\ldot{\v{s}_1}{\v{s}_2} \vert < 1$;
\item Degenerate $\Longleftrightarrow$ $\vert\ldot{\v{s}_1}{\v{s}_2} \vert = 1$;
\item Indefinite $\Longleftrightarrow$ $\vert\ldot{\v{s}_1}{\v{s}_2} \vert > 1$.
\end{itemize}
The positive definite and indefinite cases respectively determine orthogonal splittings :
\begin{align*}
\W  & \cong \R^{n,2} = \R^{2,0} \oplus \R^{n-2,2} \\
\W  & \cong \R^{n,2} = \R^{1,1} \oplus \R^{n-1,1}.
\end{align*}
In the degenerate case, the null space is spanned by
$\v{s}_1 \pm \v{s}_2$, where
\[ \ldot{\v{s}_1}{\v{s}_2} = \mp 1.\]
By replacing $\v{s}_2$ by $-\v{s}_2$ if necessary, we may assume
that $\ldot{\v{s}_1}{\v{s}_2} = 1$.
Then $\v{s}_1 - \v{s}_2$ is null.
Since $\W$ itself is nondegenerate, there exists $\v{v}_3\in\W$ such
that
\[ \ldot{(\v{s}_1 -\v{s}_2)}{\v{v}_3} = 1.\] Then  $\v{s}_1,\v{s}_2,\v{v}_3$ span
a nondegenerate $3$-plane of signature $(2,1)$.

In all three cases, there is a $5-$dimensional subspace of signature $(3,2)$ containing $\v{s}_1$ and $\v{s}_2$.
For that reason, the discussion of pairs of Einstein hyperplanes can be
reduced to the case of $n=3$.

The absolute value of the product
\[ \eta(H_1, H_2) := \vert \v{s}_1\cdot \v{s}_2 \vert \]
is a nonnegative real number, depending only on the pair
of Einstein hyperplanes $H_1$ and $H_2$.
Specifying the above discussion to the case $n=3$ we have proved Theorem \ref{ToriIntersection} :
\begin{itemize}
\item If the span of $\v{s}_1,\v{s}_2$ is positive definite ($\eta(H_1,H_2)<1$), then the intersection of the corresponding Einstein
tori is the projectivised null cone of a signature $(1,2)$ subspace, which is a timelike circle.
\item If the span of $\v{s}_1,\v{s}_2$ is indefinite ($\eta(H_1,H_2)>1$), then the intersection is the projectivised null cone
of a signature $(2,1)$ subspace, which is a spacelike circle.
\item Finally, if the span of $\v{s}_1,\v{s}_2$ is degenerate ($\eta(H_1,H_2)=1$), the span $\R \v{s}_1 + \R \v{s}_2$ is a degenerate $2$-plane in $\R \v{s}_1 + \R \v{s}_2 + \R \v{v}_3 \cong \R^{2,1}$. The orthogonal complement of this $\R^{2,1}$ is of signature $(1,1)$ and \emph{is contained} in $(\R \v{s}_1 + \R \v{s}_2)^\perp = \v{s}_1^\perp \cap \v{s}_2^\perp$. Since this last subspace is of dimension $3$ and must also contain the degenerate direction of $\R \v{s}_1 + \R \v{s}_2$, it is a degenerate subspace with signature $(+,-,0)$. Its null cone is exactly the union of two isotropic planes intersecting in the degenerate direction, so when projectivising we get a pair of photons intersecting in a point.
\end{itemize}
\begin{cor}\label{Cor:IntersectionNonContractible}
The intersection of two Einstein tori is non-contractible in each of the two tori.
\begin{proof}
An Einstein torus is a copy of the $2$-dimensional Einstein universe. Explicitly, we can write it as $\mathbb{P}(\N)$ where $\N$ is the null cone in $\mathbb{R}^{2,2}$. A computation shows that all timelike circles are homotopic, all spacelike circles are homotopic and these two homotopy classes together generate the fundamental group of the torus. Similarly, photons are homotopic to the sum of these generators.
\end{proof}
\end{cor}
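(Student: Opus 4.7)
The plan is to use Theorem~\ref{ToriIntersection} to reduce the statement to verifying that each of the three possible intersection types represents a nontrivial homotopy class in an Einstein torus. Namely, if $T_1\cap T_2$ is a spacelike circle, a timelike circle, or a pair of photons meeting at a point, I need to show this subset is not null-homotopic in $T_i \cong S^1\times S^1$. Since by Theorem~\ref{ToriIntersection} these three types exhaust the possibilities whenever $T_1\neq T_2$, it suffices to analyze each case intrinsically inside a single Einstein torus.

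First, I would set up a concrete model of $\Ein{2}$ as $\Proj(\N(V))$ for $V = \R^{2,2}$. Writing the null cone as $\{(x,y,u,v) : x^2+y^2 = u^2+v^2\}$ and using polar coordinates $(\alpha,\beta)$ on each summand, the null cone modulo positive scaling is $S^1\times S^1$, and projectivization further identifies $(\alpha,\beta)\sim(\alpha+\pi,\beta+\pi)$. The result is still a two-torus, and I would choose generators of $\pi_1$ corresponding to the two rulings by circles lifted along the two factors.

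Next, I would compute the homotopy class of each type of curve in these coordinates. Spacelike circles arise as $\Proj(\N(S))$ for signature $(2,1)$ subspaces $S\subset V$; up to the action of $\O(2,2)$ on $\Ein{2}$ all such $S$ are equivalent, so all spacelike circles lie in a single homotopy class, and a representative (say obtained from $S = \{v=0\}$) is easily seen to generate one of the two factors of the torus. The analogous argument with signature $(1,2)$ subspaces gives that all timelike circles are homotopic and generate the complementary factor. Finally, a photon is $\Proj(P)$ for a totally isotropic $2$-plane $P$; in the $(\alpha,\beta)$ model these are the diagonal loops $\alpha = \pm\beta + c$, which represent the sum of the two generators. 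In particular, in every one of the three cases of Theorem~\ref{ToriIntersection}, $T_1\cap T_2$ contains a loop representing a nonzero class in $\pi_1(T_i)$.

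The only real obstacle is bookkeeping the $\Z/2$ identification on $S^1\times S^1$ coming from the projectivization, so that the lengths and directions of the model curves match up correctly with the photon, spacelike, and timelike generators. Once this is in place, the corollary follows immediately from the three homotopy class computations combined with Theorem~\ref{ToriIntersection}.
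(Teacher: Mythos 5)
Your proposal is correct and follows essentially the same route as the paper: realize the Einstein torus as $\Proj\big(\N(\R^{2,2})\big)$ and verify that spacelike circles, timelike circles, and photons each represent nontrivial classes in $\pi_1$ of the torus, then invoke Theorem~\ref{ToriIntersection}. The one caveat is exactly the bookkeeping you flag: after the antipodal identification a photon's class is in fact \emph{half} the sum of the spacelike and timelike classes (the paper's wording has the same slight imprecision), but since all three classes are nonzero the non-contractibility conclusion is unaffected.
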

\begin{figure}
    \centering
    \begin{subfigure}[b]{0.3\textwidth}
        \includegraphics[width=\textwidth]{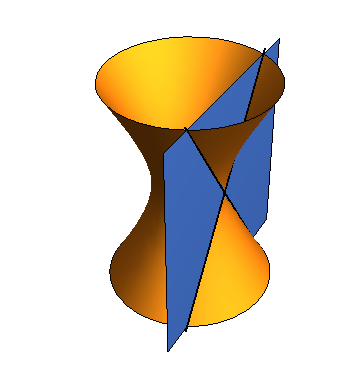}
        \caption{Two photons}
        \label{fig:photonsl}
    \end{subfigure}
    ~
    \begin{subfigure}[b]{0.3\textwidth}
        \includegraphics[width=\textwidth]{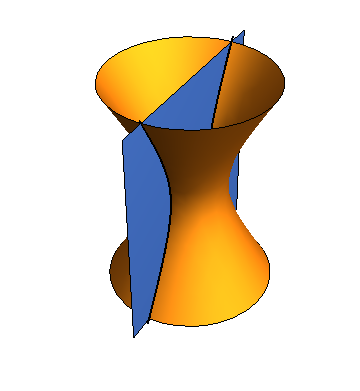}
        \caption{A timelike circle}
        \label{fig:timelike}
    \end{subfigure}
    ~
    \begin{subfigure}[b]{0.3\textwidth}
        \includegraphics[width=\textwidth]{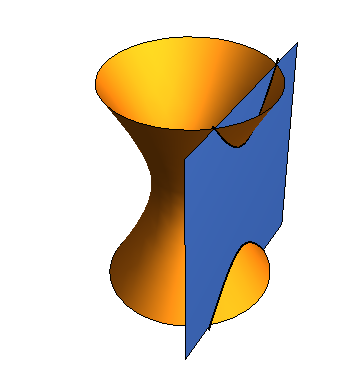}
        \caption{A spacelike circle}
        \label{fig:spacelike}
    \end{subfigure}
    \caption{The three possible types of intersection for a pair of Einstein tori, viewed in a Minkowski patch.}\label{fig:intersections}
\end{figure}
\subsection{Involutions in Einstein tori}
Orthogonal reflection in $\v{s}$ defines an involution of $\Einn$ which fixes the corresponding
hyperplane $H=\v{s}^\perp$.

The orthogonal reflection in a positive vector $\v{s}$ is defined by:
\[R_{\v{s}}(\v{v}) = \v{v} - 2\frac{\ldot{\v{v}}{\v{s}}}{\ldot{\v{s}}{\v{s}}}\v{s}.\]
We compute the eigenvalues of the composition $R_{\v{s}}R_{\v{s'}}$, where $\v{s},\v{s'}$ are unit spacelike vectors, and relate this to the invariant $\eta$.

The orthogonal subspace to the plane spanned by $\v{s}$ and $\v{s'}$ is fixed pointwise by this composition. Therefore, $1$ is an eigenvalue of multiplicity $n$. In order to determine the remaining eigenvalues, we compute the restriction of $R_{\v{s}}R_{\v{s'}}$ to the subspace $\R \v{s} + \R \v{s'}$.
\begin{align*}
R_{\v{s}}R_{\v{s'}}(\v{s}) &= R_{\v{s}}(\v{s} - 2(\ldot{\v{s}}{\v{s'}})\v{s'})\\
&=-\v{s} - 2(\ldot{\v{s}}{\v{s'}})(\v{s'}-2(\ldot{\v{s'}}{\v{s}})\v{s})\\
&=(4 (\ldot{\v{s'}}{\v{s}})^2-1)\v{s} -2(\ldot{\v{s'}}{\v{s}})\v{s'}.\\
\\
R_{\v{s}}R_{\v{s'}}(\v{s'}) &= R_{\v{s}}(-\v{s'})\\
&= -\v{s'} + 2(\ldot{\v{s}}{\v{s'}})\v{s}.
\end{align*}
The matrix representation of $R_{\v{s}}R_{\v{s'}}$ in the basis $\v{s},\v{s'}$ is therefore:
\[\begin{pmatrix}
4 (\ldot{\v{s'}}{\v{s}})^2-1 & 2(\ldot{\v{s}}{\v{s'}})\\
-2(\ldot{\v{s'}}{\v{s}}) & -1
\end{pmatrix}.
\]
The eigenvalues of this matrix are:
\[2(\ldot{\v{s}}{\v{s'}})^2 - 1 \pm 2 (\ldot{\v{s}}{\v{s'}})\sqrt{(\ldot{\v{s}}{\v{s'}})^2-1}.\]
We observe that they only depend on the invariant $\eta=|\ldot{\v{s}}{\v{s'}}|$. The composition of involutions has real distinct eigenvalues when the
intersection is spacelike, complex eigenvalues when the intersection is timelike, and a double real eigenvalue when the
intersection is a pair of photons.

The case when $\v{s}_1\cdot\v{s}_2 = 0$ is special:
in that case the two involutions commute and we will say that the Einstein hyperplanes are \emph{orthogonal}. As observed at the end of section \ref{sec:EinsteinHyperplanes}, the complement of an Einstein torus in $\Einth$ is
a model for the double covering space of
{\em anti-de Sitter space\/} $\AdS^3$, which has a complete Lorentzian metric of constant curvature $-1$.
In this conformal model of $\AdS^3$ (see \cite{G14}),
indefinite totally geodesic $2$-planes are represented by tori which are orthogonal to $\partial\AdS^3$.

\section{The Symplectic model}\label{sec:Symplectic}
We describe a model for Einstein $3$-space in terms of $4$-dimensional symplectic algebra,
an alternative approach which is simpler for some calculations.

Let $(\V,\omega)$ be a $4$-dimensional {\em real symplectic vector space,\/}
that is, $\V$ is a real vector space of dimension $4$ and
$\V\times \V\xrightarrow{~\omega~} \R$ is a nondegenerate skew-symmetric
bilinear form. Let $\mathsf{vol}\in\Lambda^4(V)$ be the element defined by the equation $(\omega\wedge\omega)(\mathsf{vol})=-2$. The second exterior power $\Lambda^2(\V)$ admits a nondegenerate symmetric bilinear form $\cdot$
of signature $(3,3)$ defined by
\[(\v{u}\wedge\v{v})\wedge(\v{u}'\wedge\v{v}')=(\v{u}\wedge\v{v})\cdot(\v{u}'\wedge\v{v}')\mathsf{vol}.\]
The kernel
\[ \W := \Ker(\omega)\subset \Lambda^2(\V)\]
inherits a symmetric bilinear form which has signature $(3,2)$.

Define the vector $\omega^*\in \Lambda^2\V$ to be \emph{dual} to $\omega$ by the equation
\[\omega^*\cdot(\v{u}\wedge\v{v}) = \omega(\v{u},\v{v}),\]
for all $\v{u},\v{v}\in \V$.
Because of our previous choice of $\mathsf{vol}$, we have
$\ldot{\omega^*}{\omega^*}=-2$.
The bilinear form $\ldot{}{}$, together with the vector $\omega^*$ define a \emph{reflection}
\begin{align*}
  \mathsf{R}_{\omega^*}:\Lambda^2(\V)&\rightarrow\Lambda^2(\V)\\
  \alpha &\mapsto \alpha + (\ldot{\alpha}{\omega^*})\omega^*.
\end{align*}
The fixed set of this reflection is exactly the vector subspace $\W$ orthogonal to $\omega^*$.

The \emph{Pl\"ucker embedding} $\iota : \mathsf{Gr}(2,\V) \rightarrow \Proj(\Lambda^2(\V))$ maps $2$-planes in $\V$ to lines in $\Lambda^2(\V)$. We say that a plane in $\V$ is \emph{Lagrangian} if the form $\omega$ vanishes identically on pairs of vectors in that plane. If we restrict $\iota$ to Lagrangian planes, then the image is exactly the set of null lines in $\W$.

The form $\omega$ yields a relation of (symplectic) orthogonality on $2$-planes in $\V$. Lagrangian planes are orthogonal to themselves, and non-Lagrangian planes have a unique orthogonal complement which is also non-Lagrangian. The following proposition relates orthogonality in $\V$ with an operation on $\Lambda^2(\V)$.
\begin{prop}
  \label{orthogonalrefl}
  A pair of $2$-dimensional subspaces $S,T\subset \V$ are orthogonal with respect to $\omega$ if and only if $[\mathsf{R}_{\omega^*}(\iota(S))] = [\iota(T)]$.
  \begin{proof}
    First, assume $S$ is Lagrangian. This means that $S=S^\perp$, and that $\iota(S)\in \omega^{*\perp}$. Hence,
    \[\mathsf{R}_{\omega^*}(\iota(S)) = \iota(S)=\iota(S^\perp).\]

    Next, if $S$ is not Lagrangian, then we can find bases $(\v{u},\v{v})$ of $S$ and $(\v{u}',\v{v}')$ of $S^\perp$ satisfying $\omega(\v{u},\v{v})=\omega(\v{u}',\v{v}')=1$ and all other products between these four are zero. Then,
    \[\mathsf{vol}=-\v{u}\wedge \v{v}\wedge \v{u}'\wedge \v{v}'\]
    and
    \[\omega^*=-\v{u}\wedge \v{v} -\v{u}'\wedge \v{v}'.\]
    Consequently,
    \[[\mathsf{R}_{\omega^*}(\iota(S))] = [\v{u}\wedge\v{v} + \omega(\v{u},\v{v})\omega^*] = [-\v{u}'\wedge\v{v}'] = [\iota(S^\perp)].\qedhere\]
  \end{proof}
\end{prop}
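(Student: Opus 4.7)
The plan is to split on whether $S$ is Lagrangian, since non-Lagrangian planes have a unique $\omega$-orthogonal complement while Lagrangian planes are self-orthogonal. In both cases I focus on proving the forward implication $T = S^\perp \Longrightarrow [\mathsf{R}_{\omega^*}(\iota(S))] = [\iota(T)]$; the converse then follows automatically from injectivity of the Pl\"ucker embedding on the Grassmannian, since the right-hand side pins down $T$ as the unique $2$-plane whose Pl\"ucker image is $[\mathsf{R}_{\omega^*}(\iota(S))]$.

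The Lagrangian case is essentially free: if $S$ is Lagrangian, then any basis $(\v{u},\v{v})$ of $S$ satisfies $\omega(\v{u},\v{v}) = 0$, so the defining property of $\omega^*$ yields $\iota(S)\cdot\omega^* = 0$. Hence $\iota(S)\in(\omega^*)^\perp = \W$, which is precisely the fixed locus of $\mathsf{R}_{\omega^*}$, and therefore $\mathsf{R}_{\omega^*}(\iota(S)) = \iota(S) = \iota(S^\perp)$, as required.

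In the non-Lagrangian case I would choose a symplectic basis adapted to the splitting $\V = S \oplus S^\perp$: bases $(\v{u},\v{v})$ of $S$ and $(\v{u}',\v{v}')$ of $S^\perp$ with $\omega(\v{u},\v{v}) = \omega(\v{u}',\v{v}') = 1$ and all cross-pairings zero. Such a basis exists because $\omega$ restricts nondegenerately to each of $S$ and $S^\perp$. The normalization $(\omega\wedge\omega)(\mathsf{vol}) = -2$ then forces $\mathsf{vol} = -\v{u}\wedge\v{v}\wedge\v{u}'\wedge\v{v}'$, and solving $\omega^*\cdot(\v{a}\wedge\v{b}) = \omega(\v{a},\v{b})$ on a basis of $\Lambda^2(\V)$ yields $\omega^* = -\v{u}\wedge\v{v} - \v{u}'\wedge\v{v}'$. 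A short wedge computation gives $(\v{u}\wedge\v{v})\cdot\omega^* = 1$, so the reflection evaluates to $\mathsf{R}_{\omega^*}(\v{u}\wedge\v{v}) = \v{u}\wedge\v{v} + \omega^* = -\v{u}'\wedge\v{v}'$, which projectivizes to $[\iota(S^\perp)]$.

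The only real obstacle is bookkeeping: the slightly unusual normalization $(\omega\wedge\omega)(\mathsf{vol}) = -2$ and the identity $\omega^*\cdot\omega^* = -2$ must be tracked carefully, because a stray sign in $\mathsf{vol}$ or $\omega^*$ would send the reflected point to $[\iota(S)]$ rather than $[\iota(S^\perp)]$. Once the adapted symplectic basis is fixed, everything else is direct.
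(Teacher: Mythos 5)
Your proposal is correct and follows essentially the same route as the paper: the same split into Lagrangian versus non-Lagrangian $S$, the same adapted symplectic basis with $\mathsf{vol}=-\v{u}\wedge\v{v}\wedge\v{u}'\wedge\v{v}'$ and $\omega^*=-\v{u}\wedge\v{v}-\v{u}'\wedge\v{v}'$, and the same one-line reflection computation giving $[-\v{u}'\wedge\v{v}']=[\iota(S^\perp)]$. Your only addition is making explicit that the converse follows from injectivity of the Pl\"ucker embedding, which the paper leaves implicit.
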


\subsection{Symplectic interpretation of Einstein space and photon space}

The natural incidence relation between $\Einth$ and $\Ph{3}$ is
described in the two algebraic models ($\V$ and $\W$) as follows.
A point $p\in\Einth$ and a photon $\phi\in\Ph{3}$ are
{\em incident\/} if and only if $(p,\phi)$ satisfies one
of the two equivalent conditions:
\begin{itemize}
\item The null line in $\W$ corresponding to $p$ lies in
the isotropic $2$-plane in $\W$ corresponding to $\phi$.
\item The Lagrangian $2$-plane in $\V$ corresponding to $p$ contains the line in $\V$ corresponding to $\phi$.
\end{itemize}
\noindent
These two are equivalent because of the following proposition :
\begin{prop}
  \label{transversePQ}
  Let $P,Q\subset \V$ be two-dimensional subspaces. Then, $P\cap Q= 0$ if and only if $\ldot{\iota(P)}{\iota(Q)}\neq0$.
  \begin{proof}
    Choose bases $\v{u},\v{v}$ of $P$ and $\v{u}',\v{v}'$ of $Q$. Then,
    \[\v{u}\wedge\v{v}\wedge\v{u}'\wedge\v{v}' \neq 0\]
    if and only if $\v{u},\v{v},\v{u}',\v{v}'$ span $\V$ which is equivalent to $P$ and $Q$ being transverse.
  \end{proof}
\end{prop}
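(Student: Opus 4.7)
The plan is to unwind the defining equation of the bilinear form on $\Lambda^2(\V)$ and reduce the statement to a standard fact about the top exterior power of a $4$-dimensional vector space. Fix bases $\v{u},\v{v}$ of $P$ and $\v{u}',\v{v}'$ of $Q$, so that $\iota(P) = [\v{u}\wedge\v{v}]$ and $\iota(Q) = [\v{u}'\wedge\v{v}']$. By the definition of $\cdot$ in the excerpt,
\[ (\v{u}\wedge\v{v})\wedge(\v{u}'\wedge\v{v}') \;=\; \bigl((\v{u}\wedge\v{v})\cdot(\v{u}'\wedge\v{v}')\bigr)\,\mathsf{vol}, \]
and since $\mathsf{vol}\neq 0$, nonvanishing of $\iota(P)\cdot\iota(Q)$ is equivalent to nonvanishing of the $4$-form $\v{u}\wedge\v{v}\wedge\v{u}'\wedge\v{v}'\in \Lambda^4\V$.

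The key step is then the standard linear-algebra observation that a decomposable element $\v{u}\wedge\v{v}\wedge\v{u}'\wedge\v{v}'$ in the top exterior power $\Lambda^4\V$ of a $4$-dimensional space is nonzero if and only if the four vectors are linearly independent, i.e.\ if and only if they span $\V$.

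Finally, I invoke the dimension formula $\dim(P+Q) = \dim P + \dim Q - \dim(P\cap Q) = 4 - \dim(P\cap Q)$. Thus the spanning set $\{\v{u},\v{v},\v{u}',\v{v}'\}$ is a basis of $\V$ exactly when $\dim(P\cap Q)=0$, i.e.\ $P\cap Q = 0$. Combining these equivalences yields the proposition. There is no real obstacle: each implication is a direct unwinding of definitions, and independence of the choice of bases for $P$ and $Q$ is automatic because changing bases rescales $\iota(P)$ and $\iota(Q)$ by nonzero scalars, which does not affect whether their product vanishes.
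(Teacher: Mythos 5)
Your proof is correct and follows essentially the same route as the paper's: choose bases of $P$ and $Q$, translate $\ldot{\iota(P)}{\iota(Q)}\neq 0$ into nonvanishing of the $4$-form $\v{u}\wedge\v{v}\wedge\v{u}'\wedge\v{v}'$ via the defining equation involving $\mathsf{vol}$, and identify this with transversality. You merely spell out a few steps (the dimension count and basis independence) that the paper leaves implicit.
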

The {\em light cone\/} $\Li(p)$ of a point $p\in\Einth$ is the union of
all photons containing $p$.
It corresponds to the orthogonal hyperplane $[p]^\perp\subset \W$ of the null line corresponding to $p$.
In photon space $\Proj(\V)$, the photons containing
$p$ form the projective space $\Proj(L)$ of the Lagrangian
$2$-plane $L$ corresponding to $p$.
\subsection{Timelike or spacelike triples and the Maslov index}
Fixing a pair of non-incident points in the Einstein universe induces a trichotomy on points, as explained in section \ref{minkowski_patches}.
The corresponding data in the Lagrangian model is related to the \emph{Maslov index} of a triple of Lagrangians.

Two non-incident points correspond to a pair of transverse Lagrangians $L,L'$. This induces a splitting $\V=L\oplus L'$.
Together with the symplectic form $\omega$, this splitting defines a quadratic form defined by
\[q_{L,L'}(\v{v}) := \omega(\pi_L(\v{v}),\pi_{L'}(\v{v})).\]

The \emph{Maslov index} of a triple of pairwise transverse Lagrangians $L,P,L'$ is the integer $m(L,P,L')=\mathrm{sign}(q_{L,L'}|_P)$, where $\mathrm{sign}(q)$  is the difference between the number of positive and negative eigenvalues of $q$. Transversality
implies that $q_{L,L'}$ restricted to $P$ is nondegenerate. This index classifies orbits of triples of pairwise transverse Lagrangians \cite{MR2264460}.

Lagrangians which are nontransverse to $L$ correspond to lightlike points, Lagrangians $P$ with $|m(L,P,L')|=2$
correspond to timelike points, and Lagrangians $P$ with $m(L,P,L')=0$ correspond to spacelike points.
\subsection{Nondegenerate planes and symplectic splittings}
\label{sec:ssplit}
We describe the algebraic structures equivalent to an
{\em Einstein torus\/} in $\Einth$. As a reminder, these are
hyperplanes of signature $(2,2)$ inside $\W \cong \R^{3,2}$,
and describe surfaces in $\Ein{3}$ homeomorphic to a $2$-torus.

In symplectic terms, an Einstein torus corresponds to a splitting
of $\V$ as a symplectic direct sum of two nondegenerate
$2$-planes. Let us detail this correspondence.

Define a $2$-dimensional subspace $S\subset\V$ to be {\em nondegenerate\/} if and only if
the restriction $\omega|_S$ is nondegenerate.
A nondegenerate $2$-plane $S\subset \V$ determines a splitting as follows. The plane
\[ S^\perp := \{ \v{v}\in\V \mid \omega(\v{v},S) = 0\} \]
is also nondegenerate, and defines a {\em symplectic complement\/} to $S$.
In other words, $\V$ splits as an (internal) symplectic direct sum:
\[ \V = S \oplus S^\perp.\]
The corresponding Einstein torus is then the set of Lagrangians which are non-transverse
to $S$ (and therefore also to $S^\perp$).

The lines in $S$ determine a projective line in $\Ph{3}$ which is
{\em not\/} Legendrian. Conversely, non-Legendrian
projective lines in $\Ph{3}$ correspond to nondegenerate $2$-planes. This non-Legendrian line in $\Ph{3}$, as a set of photons, corresponds to one of the two rulings of the Einstein torus.
The other ruling corresponds to the line $\Proj(S^\perp)$.

In order to make explicit the relationship between the descriptions of Einstein tori in the two models, define a map $\mu$ as follows:
\begin{align*}\mu : \mathsf{Gr}(2,\V) &\rightarrow \Proj(\W)\\
                       S &\mapsto \left[\widehat{\iota(S)} + \frac12 \omega(\widehat{\iota(S)})\omega^*\right],
\end{align*}
where $\widehat{\iota(S)}$ is any representative in $\W$ of the projective class $\iota(S)$.

The map $\mu$ is the composition of the Pl\"ucker embedding $\iota$ with the orthogonal projection onto $\W$.
\begin{lem}
  \label{symplecticspacelike}
  For $S$ a nondegenerate plane, the image of $\mu$ is always a spacelike line, and $\mu(S)=\mu(S^\perp)$.
  \begin{proof}
    For the first part, let $\v{s}$ be any vector representative of the line $\iota(S)$. Then,
    \[\left(\v{s} + \frac12\omega(\v{s})\omega^*\right) \wedge \left(\v{s} +  \frac12\omega(\v{s})\omega^*\right) = \frac12\omega(\v{s})^2 \mathsf{vol},\]
    and therefore $\mu(S)$ is spacelike.

    The second part is a consequence of the correspondence between orthogonal complements and reflection in $\omega^*$ (Proposition \ref{orthogonalrefl}) and the fact that a vector and its reflected copy have the same orthogonal projection to the hyperplane of reflection.
  \end{proof}
\end{lem}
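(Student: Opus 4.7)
My plan is to handle the two claims separately, using only the defining equation of $\omega^*$ and the bilinear form on $\Lambda^2(\V)$.

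For the spacelike claim, I would expand $\ldot{\mu(S)}{\mu(S)}$ by bilinearity and reduce it to three elementary inner products. The decomposable $2$-vector $\iota(S) = \v{u}\wedge\v{v}$ satisfies $(\v{u}\wedge\v{v})\wedge(\v{u}\wedge\v{v}) = 0$, so $\ldot{\iota(S)}{\iota(S)} = 0$. The cross term is $\ldot{\iota(S)}{\omega^*} = \omega(\iota(S))$ directly from the definition of $\omega^*$. The last term uses $\ldot{\omega^*}{\omega^*} = -2$, which is fixed by the chosen normalization of $\mathsf{vol}$. Substituting,
\[ \ldot{\mu(S)}{\mu(S)} \;=\; 0 + \omega(\iota(S))^2 + \tfrac{1}{4}\omega(\iota(S))^2 (-2) \;=\; \tfrac{1}{2}\omega(\iota(S))^2, \]
which is strictly positive because $S$ nondegenerate means $\omega|_S \neq 0$, hence $\omega(\iota(S)) \neq 0$.

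For the equality $\mu(S) = \mu(S^\perp)$, the key conceptual observation is that $\mu$ factors as the orthogonal projection $\pi_\W$ onto $\W$ composed with $\iota$. Indeed, because $\ldot{\omega^*}{\omega^*} = -2$, the orthogonal projection onto $\W = (\omega^*)^\perp$ is
\[ \pi_\W(x) \;=\; x - \frac{\ldot{x}{\omega^*}}{\ldot{\omega^*}{\omega^*}}\omega^* \;=\; x + \tfrac{1}{2}\omega(x)\omega^*, \]
which matches the definition of $\mu$ applied to $x = \iota(S)$. Now any vector and its reflection $R_{\omega^*}(v)$ differ only in their $\omega^*$-component, so they share the same $\W$-component and hence the same projection onto $\W$. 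By Proposition \ref{orthogonalrefl}, $\iota(S^\perp)$ and $R_{\omega^*}(\iota(S))$ represent the same projective class in $\Proj(\Lambda^2\V)$, so applying $\pi_\W$ to each yields the same element of $\Proj(\W)$. This gives $\mu(S^\perp) = \mu(S)$.

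There is no genuine obstacle here; both parts reduce to direct verification. The only subtlety worth flagging is the scalar bookkeeping: since $\iota(S)$ is defined only up to a nonzero scalar, the map $\mu$ is strictly speaking a map to $\Proj(\W)$, with $\mu$ homogeneous of degree $1$ in the chosen representative of $\iota(S)$. Thus the spacelike conclusion must be read as a statement about the sign of $\ldot{\mu(S)}{\mu(S)}$ (which is scale-invariant up to squares), and the equality $\mu(S) = \mu(S^\perp)$ as an equality of projective classes. The factor $\tfrac{1}{2}$ in the definition of $\mu$ is precisely what aligns with the normalization $\ldot{\omega^*}{\omega^*} = -2$, so the two descriptions of $\mu$ — as the stated formula and as $\pi_\W \circ \iota$ — coincide on the nose.
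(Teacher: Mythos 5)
Your proof is correct and follows essentially the same route as the paper: the first part is the direct bilinear expansion (using $\ldot{\iota(S)}{\iota(S)}=0$, $\ldot{\iota(S)}{\omega^*}=\omega(\iota(S))$, $\ldot{\omega^*}{\omega^*}=-2$) that the paper leaves implicit, and the second part is exactly the paper's argument that $\mu=\pi_\W\circ\iota$ together with Proposition \ref{orthogonalrefl} and the fact that a vector and its $\mathsf{R}_{\omega^*}$-image have the same projection to $\W$. Your remark that the statement should be read projectively (since $\iota(S)$ is only defined up to scale) is a sensible clarification consistent with how the paper later normalizes representatives.
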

\begin{prop}
  The map $\mu$ induces a bijection between spacelike lines in $\W$ and symplectic splittings of $\V$. Under the Pl\"ucker embedding $\iota$, the Einstein torus defined by the symplectic splitting $S\oplus S^\perp$ is sent to the Einstein torus defined by the spacelike vector $\mu(S)\in\W$.
\begin{proof}
Let $\v{u}\in \W$ be a spacelike vector normalized so that $\v{u}\cdot \v{u}= 2$.
Then, both vectors $\v{u} \pm \omega^*$ are null.
By the fact that null vectors in $\Lambda^2(\V)$
are decomposable,
each $\v{u} \pm \omega^*$ corresponds to a $2$-plane in $\V$.
These $2$-planes are nondegenerate since
\[(\v{u}\pm\omega^*) \wedge \omega^* =
-\omega(\v{u}\pm\omega^*)\mathsf{vol} = 2\neq 0.\]
The two planes $\v{u} \pm \omega^*$ are orthogonal since they are the images of each other by the reflection $\mathsf{R}_{\omega^*}$, and so they are the summands for a symplectic splitting of $\V$.

This map is inverse to the projection $\mu$ defined above.

To prove the last statement in the proposition, we apply proposition \ref{transversePQ}. The Einstein torus defined by the splitting $S,S^\perp$ is the set of Lagrangian planes which intersect $S$ (and $S^\perp$) in a nonzero subspace. Let $P$ be such a plane. Then, $\ldot{\iota(S)}{\iota(P)}=0$, which means that
\[\ldot{\left(\iota(S) + \frac12 (\ldot{\iota(S)}{\omega^*})\omega^*\right)}{\iota(P)}=0,\]
so $\iota(P)$ is in the Einstein torus defined by the orthogonal projection $\mu(S)$. Similarly, if $\iota(P)$ is orthogonal to $\mu(S)$ then $P$ intersects $S$ in a nonzero subspace.
\end{proof}
\end{prop}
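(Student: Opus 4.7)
The plan is to construct an explicit inverse to $\mu$ on spacelike lines, producing the two summands of a symplectic splitting directly from a normalized spacelike representative, and then to translate nontransversality to $S$ in $\V$ into an orthogonality condition in $\W$ using Proposition~\ref{transversePQ}.

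For the bijection, I would start with a spacelike line in $\W$ and pick a representative $\v{u}$ with $\ldot{\v{u}}{\v{u}}=2$. Since $\W=(\omega^*)^\perp$ and $\ldot{\omega^*}{\omega^*}=-2$, the two vectors $\v{u}\pm\omega^*\in\Lambda^2(\V)$ satisfy
\[
\ldot{(\v{u}\pm\omega^*)}{(\v{u}\pm\omega^*)} = 2+0-2 = 0.
\]
In a $4$-dimensional $\V$, null vectors in $\Lambda^2(\V)$ are exactly the decomposable ones (the Pl\"ucker relation $x\wedge x = (\ldot{x}{x})\mathsf{vol}$), so $\v{u}\pm\omega^*$ represent $2$-planes $S_\pm\subset\V$. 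These are nondegenerate because $\omega(\iota(S_\pm)) = \ldot{\omega^*}{\v{u}\pm\omega^*} = \mp 2 \neq 0$, and they form a symplectic splitting because $\mathsf{R}_{\omega^*}$ fixes $\W$ pointwise and sends $\omega^*$ to $-\omega^*$, hence exchanges them; Proposition~\ref{orthogonalrefl} then gives $S_-=S_+^\perp$. Replacing $\v{u}$ by $-\v{u}$ merely swaps the two summands, so the unoriented line $[\v{u}]$ determines the unordered splitting. To check the map is inverse to $\mu$, normalize $\iota(S)$ so that $\omega(\iota(S))=-2$; then $\mu(S) = \iota(S)-\omega^*$ lies in $\W$ with $\ldot{\mu(S)}{\mu(S)}=2$, and $\iota(S) = \mu(S)+\omega^*$ recovers $S_+=S$. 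Consistency with Lemma~\ref{symplecticspacelike} shows that applying the construction to the complementary summand produces the same line.

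For the torus correspondence, let $P$ be a Lagrangian plane; then $\iota(P)\in\W$, so $\ldot{\iota(P)}{\omega^*}=0$ and therefore
\[
\ldot{\iota(P)}{\iota(S)} \;=\; \ldot{\iota(P)}{\mu(S) - \tfrac{1}{2}\omega(\iota(S))\omega^*} \;=\; \ldot{\iota(P)}{\mu(S)}.
\]
By Proposition~\ref{transversePQ}, the condition $P\cap S\neq 0$ is equivalent to $\ldot{\iota(P)}{\iota(S)} = 0$, hence to $\iota(P)\in\mu(S)^\perp\cap\W$, which is exactly the defining condition for membership in the Einstein torus determined by the spacelike vector $\mu(S)$. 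The main obstacle is bookkeeping: $\iota(S)$ is only defined up to scalar, while the formula for $\mu$ requires an actual vector. Once the normalization $\omega(\iota(S))=-2$ (equivalently $\ldot{\mu(S)}{\mu(S)}=2$) is fixed, both directions of the bijection become routine, and the torus statement reduces to the one-line computation above.
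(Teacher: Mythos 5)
Your proposal is correct and follows essentially the same route as the paper: you build the inverse of $\mu$ from the null vectors $\v{u}\pm\omega^*$ via decomposability, use $\mathsf{R}_{\omega^*}$ and Proposition~\ref{orthogonalrefl} for orthogonality of the two summands, and deduce the torus correspondence from Proposition~\ref{transversePQ} together with $\ldot{\iota(P)}{\omega^*}=0$. Your extra care with the scaling normalization and the explicit inverse check only makes the paper's one-line assertions more precise.
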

\subsection{Graphs of linear maps}
Now we describe pairs of Einstein tori in terms of symplectic splittings of
$(\V,\omega)$ more explicitly.

Let $A,B$ be vector spaces of dimension $2$ and $A\oplus B$ their
direct sum. If $A\xrightarrow{f}B$ is a linear map,
then the {\em graph\/} of $f$ is the linear subspace
$\graph(f)\subset A\oplus B$ consisting of all
$\v{a} \oplus f(\v{a})$, where $\v{a}\in A$.
Every $2$-dimensional linear subspace  $L \subset A\oplus B$ which is transverse to
$B = 0 \oplus B\subset A\oplus B$ equals $\graph(f)$ for a unique  $f$.
Furthermore, $L = \graph(f)$ is transverse to $A = A \oplus 0$ if and only if
$f$ is invertible, in which case $L = \graph(f^{-1})$ for the inverse map
$B \xrightarrow{f^{-1}} A$.

Now, suppose $A$ and $B$ are endowed with nondegenerate alternating bilinear forms $\omega_A,
\omega_B$, respectively.
Let $A \xrightarrow{f} B$ be a linear map.
Its {\em adjugate\/} is the linear map
\[ B \xrightarrow{\Adj(f)} A \]
defined as the composition
\begin{equation}\label{eq:Adjugates}
B \xrightarrow{\omega_B^\#} B^*
\xrightarrow{f^\dag} A^*
\xrightarrow{(\omega_A^\#)^{-1}} A \end{equation}
where  $\omega_A^\#, \omega_B^\#$ are isomorphisms induced by
$\omega_A, \omega_B$ respectively, and $f^\dag$ is the transpose of $f$.
If $\v{a}_1,\v{a}_2$ and $\v{b}_1,\v{b}_2$ are bases of $A$ and $B$ respectively
with
\begin{align*} \omega_A(\v{a}_1,\v{a}_2) &= 1 \\
\omega_B(\v{b}_1,\v{b}_2) &= 1,  \end{align*}
then the matrices representing $f$ and $\Adj(f)$ in these bases are related by:
\[
\Adj \bmatrix f_{11} & f_{12}  \\ f_{21} & f_{22}  \endbmatrix  \;=\;
\bmatrix f_{22} & -f_{12}  \\ -f_{21} & f_{11}  \endbmatrix.
\]
In particular, if $f$ is invertible, then
\[
\Adj(f)  =  \Det(f) f^{-1} \]
where $\Det(f)$ is defined by
$f^*(\omega_B) = \Det(f) \omega_A$.

\begin{lem}
Let $\V = S \oplus S^\perp$.
Let $S \xrightarrow{f} S^\perp$ be a linear map
and let $P = \graph(f)\subset\V$ be the corresponding
$2$-plane in $\V$ which is transverse to $S^\perp$.
\begin{itemize}
\item $P$ is nondegenerate if and only if $\Det(f) \neq -1$.
\item If $P$ is nondegenerate, then its complement $P^\perp$
is transverse to $S$, and equals the graph
\[ P^\perp = \graph\big(-\Adj(f) \big), \]
of the negative of the {\em adjugate map\/} to $f$
\[ S^\perp \xrightarrow{~-\Adj(f)}S.\]
\end{itemize}
\end{lem}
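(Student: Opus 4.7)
The plan is to choose symplectic bases adapted to the splitting and reduce everything to the identity $\omega = \omega_S \oplus \omega_{S^\perp}$ on $S \oplus S^\perp$. Pick bases $\v{a}_1,\v{a}_2$ of $S$ and $\v{b}_1,\v{b}_2$ of $S^\perp$ with $\omega(\v{a}_1,\v{a}_2)=\omega(\v{b}_1,\v{b}_2)=1$. Because $S$ and $S^\perp$ are symplectically orthogonal, for any $\v{x},\v{y}\in S$ we get
\[
\omega\bigl(\v{x}+f(\v{x}),\,\v{y}+f(\v{y})\bigr)
\;=\;\omega_S(\v{x},\v{y})+\omega_{S^\perp}\bigl(f(\v{x}),f(\v{y})\bigr).
\]
By the defining relation $f^*\omega_{S^\perp}=\Det(f)\,\omega_S$, the right-hand side equals $\bigl(1+\Det(f)\bigr)\omega_S(\v{x},\v{y})$. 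Pulled back to $P$ via the graph parametrization $S\to P$, the form $\omega|_P$ is therefore just $(1+\Det(f))\,\omega_S$, which is nondegenerate if and only if $\Det(f)\neq -1$. This handles the first bullet.

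For the second bullet, I would describe $P^\perp$ by solving the orthogonality condition directly. Write an arbitrary vector of $\V$ as $\v{s}+\v{t}$ with $\v{s}\in S$ and $\v{t}\in S^\perp$. Using symplectic orthogonality of the summands,
\[
\omega\bigl(\v{s}+\v{t},\,\v{a}+f(\v{a})\bigr)
\;=\;\omega_S(\v{s},\v{a})+\omega_{S^\perp}\bigl(\v{t},f(\v{a})\bigr).
\]
The definition \eqref{eq:Adjugates} of $\Adj(f)$ is precisely what makes the second term equal to $\omega_S(\Adj(f)(\v{t}),\v{a})$, so the condition that $\v{s}+\v{t}\in P^\perp$ becomes $\omega_S(\v{s}+\Adj(f)(\v{t}),\v{a})=0$ for every $\v{a}\in S$. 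Nondegeneracy of $\omega_S$ forces $\v{s}=-\Adj(f)(\v{t})$, i.e.\ $P^\perp=\graph(-\Adj(f))$.

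Transversality of $P^\perp$ to $S$ is then automatic: an element of $P^\perp\cap S$ must have $\v{t}=0$, hence equals $\v{0}$, so $P^\perp\cap S=0$ and (by dimension count) $\V=S\oplus P^\perp$. Nondegeneracy of $P$ was used only to ensure $\dim P^\perp=2$, which is what lets us conclude $P^\perp$ is a bona fide complement.

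The computation is essentially bookkeeping; the main place to be careful is the sign in the adjugate identification. I expect the chief conceptual point is to recognize that in dimension $2$ the relation $f^*\omega_{S^\perp}=\Det(f)\omega_S$ converts the symplectic form on $P$ into a scalar multiple of $\omega_S$, which reduces nondegeneracy of $\omega|_P$ to the single scalar condition $1+\Det(f)\neq 0$; everything else is linear algebra of the adjugate.
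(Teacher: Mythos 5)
Your proof is correct and essentially the paper's: the first bullet is the same computation $\omega\big(\v{a}+f(\v{a}),\v{b}+f(\v{b})\big)=(1+\Det(f))\,\omega(\v{a},\v{b})$, and the second bullet rests on the same pairing $\omega_S(\v{s},\v{a})+\omega_{S^\perp}\big(\v{t},f(\v{a})\big)$ combined with the adjugate identity, the only difference being organizational --- the paper first argues that $P^\perp$ is transverse to $S$ (hence a graph over $S^\perp$) and then identifies the graphing map from $\omega(P,P^\perp)=0$, whereas you solve the orthogonality condition directly for the $S$-component and obtain both the graph description and transversality at once. One small slip in your closing remark: $\dim P^\perp=2$ follows from nondegeneracy of $\omega$ on $\V$ alone (and indeed your computation of $P^\perp=\graph(-\Adj(f))$ never uses nondegeneracy of $P$), so that hypothesis is not what secures the dimension count; it is simply part of the lemma's standing assumption and is what the paper uses in its preliminary transversality equivalences.
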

\begin{proof}
Choose a basis $\v{a}, \v{b}$ for $S$.
Then $\v{a} \oplus f(\v{a})$ and $\v{b} \oplus f(\v{b})$
define a basis for $P$, and
\begin{align*}
\omega\big(\v{a} \oplus f(\v{a}),\v{b} \oplus f(\v{b})\big) & =
\omega\big(\v{a},\v{b}\big) + \omega\big( f(\v{a}),f(\v{b})\big)
\\ & = \big(1 + \Det(f)\big) \omega\big(\v{a},\v{b}\big),
\end{align*}
since, by definition,
\[
\omega\big( f(\v{a}),f(\v{b})\big) = \Det(f)
\omega\big(\v{a},\v{b}\big). \]
Thus $P$ is nondegenerate if and only if $1 + \Det(f)\neq 0$, as desired.

For the second assertion, suppose that $P$ is nondegenerate.
Since $P,P^\perp, S, S^\perp\subset \V$ are each $2$-dimensional,
the following conditions are equivalent:
\begin{itemize}
\item $P$ is transverse to $S^\perp$;
\item $P\cap S^\perp = 0$;
\item $P^\perp + S = \V$;
\item $P^\perp$ is transverse to $S$.
\end{itemize}
Thus $P^\perp = \graph(g)$ for a linear map $S^\perp \xrightarrow{g} S$.

We express the condition that $\omega(P,P^\perp) = 0$ in terms of $f$ and $g$:
For $\v{s}\in S$ and $\v{t}\in S^\perp$, the symplectic product is zero if anly only if
\begin{equation}\label{eq:Perps}
\omega\big(\v{s} + f(\v{s}), t + g(\v{t}) \big) =
\omega\big(\v{s},  g(\v{t}) \big) + \omega\big(f(\v{s}), \v{t}\big) \end{equation}
vanishes.
This condition easily implies that $g = -\Adj(f)$ as claimed.
\end{proof}
The following proposition relates the invariant $\eta$ defined for a pair of spacelike vectors with the invariant $\Det$ associated to a pair of symplectic splittings.
\begin{prop}
  Let $S\oplus S^\perp$ be a symplectic splitting and $f:S\rightarrow S^\perp$ be a linear map with $\Det(f)\neq -1$. Let $T=\graph(f)$ be the symplectic plane defined by $f$. Then,
  \[\eta(\mu(S),\mu(T))=
   \frac{|1-\Det(f)|}{|1+\Det(f)|}.\]
  \begin{proof}
    Let $\v{u},\v{v}$ be a basis for $S$ such that $\omega(\v{u},\v{v})=1$. Then, $\v{u}+f(\v{u}),\v{v}+f(\v{v})$ is a basis for $T$. Moreover,
    \[\v{u}\wedge \v{v}\wedge(\v{u} + f(\v{u}))\wedge(\v{v} + f(\v{v})) = \v{u}\wedge \v{v} \wedge f(\v{u}) \wedge f(\v{v}).\]
    We can compute which multiple of $\mathsf{vol}$ this last expression represents by using the normalization $(\omega\wedge\omega)(\mathsf{vol})=-2$  and the computation
    \[(\omega\wedge\omega)(\v{u}\wedge \v{v}\wedge f(\v{u})\wedge f(\v{v})) =  2\Det(f).\]
    We deduce that
    \[\v{u}\wedge \v{v} \wedge f(\v{u}) \wedge f(\v{v})=-\Det(f)\mathsf{vol}.\]

    Using the product formula from lemma \ref{symplecticspacelike}, we find that
    \[\sqrt{2}\v{u}\wedge\v{v} + \frac{\omega^*}{\sqrt{2}}\]
    is a unit spacelike representative of $\mu(S)$, and
    \[\frac{\sqrt{2}(\v{u} + f(\v{u}))\wedge(\v{v} + f(\v{v}))}{1+\Det(f)} + \frac{\omega^*}{\sqrt{2}}\]
    is a unit spacelike representative of $\mu(T)$. Their product is
     \[\frac{-2\Det(f)}{1+\Det(f)} + 1 = \frac{1-\Det(f)}{1+\Det(f)},\]
     proving the proposition.

  \end{proof}
\end{prop}

\section{Disjoint crooked surfaces}\label{sec:crookedsurf}
In this section we apply the techniques developed above in order to prove a full disjointness criterion for pairs
of crooked surfaces.

We work in the symplectic framework of section \ref{sec:Symplectic} with the symplectic vector space $(\V,\omega)$.

Let $\v{u}_+,\v{u}_-,\v{v}_+,\v{v}_-$ be four vectors in $\V$ such that \[\omega(\v{u}_+,\v{v}_-)=\omega(\v{u}_-,\v{v}_+)=1\] and all other
products between these four vanish. This means that we have Lagrangians
\begin{align*}
  P_0 &:= \R \v{v}_+ + \R \v{v}_-,\\
  P_\infty &:= \R \v{u}_+ + \R \v{u}_-, and\\
  P_\pm &:=\R\v{v}_\pm + \R\v{u}_\pm
\end{align*}
representing the points of intersection of the photons associated to $[\v{u}_+],[\v{u}_-],[\v{v}_+],[\v{v}_-]$. We call this configuration of four points and four photons a \emph{lightlike quadrilateral}.

The \emph{crooked surface} $C$ determined by this configuration is a subset of $\Einth$ consisting of three pieces : two \emph{wings} and a \emph{stem} (see Figure \ref{fig:crookedsurface}).
The two wings are foliated by photons, and we will denote by $\mathcal{W}_+, \mathcal{W}_-$ the sets of photons covering the wings. Each wing is a subset of the light cone of $P_+$ and $P_-$, respectively. Identifying points in $\Proj(\V)$ with the photons they represent, the foliations are as follows:
\[\mathcal{W}_+ = \{[t \v{u}_+ + s \v{v}_+] ~ | ~ t s \geq 0\},\]
\[\mathcal{W}_- = \{[t \v{u}_- + s \v{v}_-] ~ | ~ t s \leq 0\}.\]
We will sometimes abuse notation and use the symbol $\mathcal{W}_\pm$ to denote the collection of points in the Einstein universe which is the union of these collections of photons.

The stem $\mathcal{S}$ is the subset of the Einstein torus determined by the splitting
$S_1 \oplus S_2 := (\R \v{u}_+ + \R \v{v}_-) \oplus (\R \v{u}_- + \R \v{v}_+)$
consisting of timelike points with respect to $P_0,P_\infty$ :
\[\mathcal{S} = \{L = \R \v{w} + \R \v{w}' ~|~ \v{w} \in S_1, \v{w'}\in S_2, |m(P_0,L,P_\infty)| = 2\}.\]
Note that this definition gives only the \emph{interior} of the stem as defined in \cite{MR3231611}. A crooked surface is the closure in $\Einth$ of a \emph{crooked plane} in the
Minkowski patch defined by the complement of the light cone of $P_\infty$
(see \cite{MR3231611}).

\begin{figure}[h]
  \centering
  \includegraphics[width=.8\textwidth]{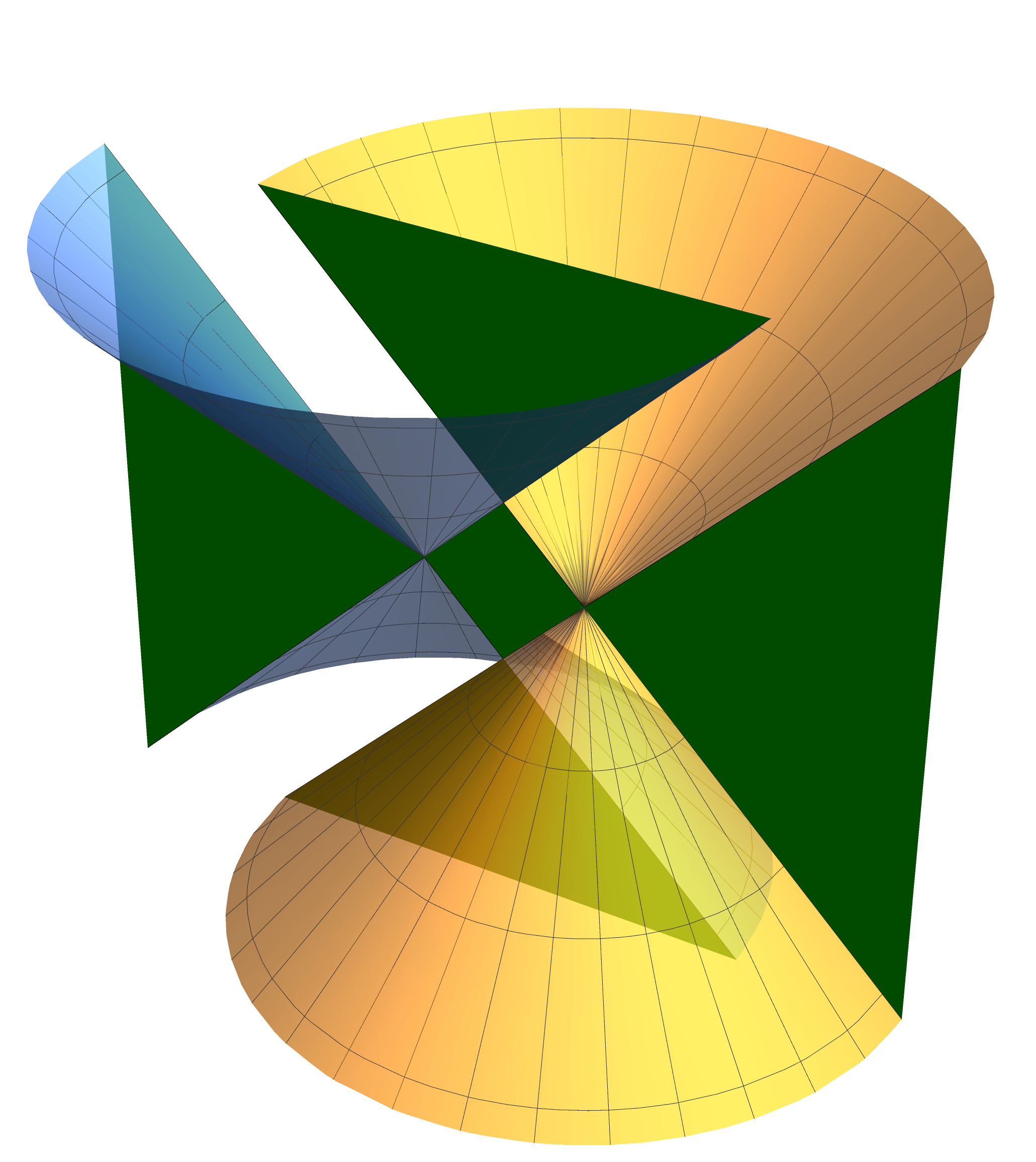}
  \caption{A crooked surface in a Minkowski patch which contains all four vertices of the lightlike quadrilateral. The Einstein torus containing the stem is a vertical plane.}
  \label{fig:crookedsurface}
\end{figure}

\begin{thm}
Let $C_1$, $C_2$ be two crooked surfaces such that their stems intersect. Then, the stem of $C_1$ intersects a wing of $C_2$ or vice versa. That is, crooked surfaces cannot intersect in their stems only.
\begin{proof}
The stem consists of two disjoint, contractible pieces. To see this, note that this set is contained in the Minkowski patch defined by $P_\infty$. There, the Einstein torus containing the stem is a timelike plane through the origin, and the timelike points in this plane form two disjoint quadrants. Let $K$ be the intersection of the two Einstein tori containing the stems of $C_1$ and $C_2$. Then, $K$ is non-contractible in either torus (Corollary \ref{Cor:IntersectionNonContractible}), so it can't be contained in the interior of the stem. Therefore, $K$ must intersect the boundary of the stem which is part of the wings.
\end{proof}
\label{nostemonly}
\end{thm}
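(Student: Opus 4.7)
The strategy is to combine the fact that each stem interior is a union of contractible pieces inside its Einstein torus with Corollary \ref{Cor:IntersectionNonContractible}, which asserts that the intersection of two distinct Einstein tori is non-contractible in each torus. Let $T_i$ denote the Einstein torus containing the stem $\mathcal{S}_i$ of $C_i$. We may assume $T_1 \neq T_2$, since the case $T_1 = T_2$ reduces to a direct examination inside a single $\Ein{2}$. By Theorem \ref{ToriIntersection}, $K := T_1 \cap T_2$ is a pair of photons meeting at a point, a spacelike circle, or a timelike circle, and in every case each connected component is non-contractible in $T_1$ and in $T_2$.

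Next I would unpack the stem's structure inside $T_i$. Passing to the Minkowski patch complementary to the light cone of $P_\infty$ of $C_i$, the torus $T_i$ becomes a timelike $2$-plane through the origin, and $\mathcal{S}_i$ is the set of timelike points there, which is a union of two disjoint open ``quadrants,'' each contractible. Crucially, the boundary of $\mathcal{S}_i$ inside $T_i$ consists of the four boundary photons of the lightlike quadrilateral of $C_i$, and these photons are themselves contained in the wings $\mathcal{W}_\pm$ of $C_i$.

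With these preparations in hand, choose $p \in \mathcal{S}_1 \cap \mathcal{S}_2$, and note $p \in K$. Let $\kappa$ be the connected component of $K$ containing $p$. If $\kappa$ were entirely contained in $\mathcal{S}_1$, then by connectedness $\kappa$ would lie in a single contractible quadrant of $\mathcal{S}_1$, contradicting the non-contractibility of $\kappa$ inside $T_1$; hence $\kappa$ must cross $\partial \mathcal{S}_1$, and symmetrically it must cross $\partial \mathcal{S}_2$. Picking a continuous parametrization $\gamma$ of $\kappa$ starting at $p$ and letting $t_0$ be the first time $\gamma$ exits the open set $\mathcal{S}_1 \cap \mathcal{S}_2$, one finds $\gamma(t_0) \in (\partial \mathcal{S}_1 \cap \overline{\mathcal{S}}_2) \cup (\overline{\mathcal{S}}_1 \cap \partial \mathcal{S}_2)$. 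If $\gamma(t_0) \in \partial \mathcal{S}_1 \cap \mathcal{S}_2$, then it produces a point of $\mathcal{W}_\pm^{(1)} \cap \mathcal{S}_2$, and symmetrically in the other case; this yields the desired stem-meets-wing intersection.

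The principal obstacle is the degenerate ``corner'' case in which $\gamma(t_0)$ simultaneously hits boundary photons of both $C_1$ and $C_2$, so that $\gamma(t_0)$ lies in $\partial \mathcal{S}_1 \cap \partial \mathcal{S}_2$ rather than in the interior of one of the stems. To handle this, I would push slightly past $t_0$ along the non-contractible $\kappa$ and use the explicit symplectic description of the eight boundary photons of the two lightlike quadrilaterals: the intersection of $K$ with these finitely many photons is a finite configuration, so a generic choice of $p$ (or a short local perturbation along $\kappa$) produces a nearby point of $\kappa$ lying in exactly one open stem and on the other's boundary photon. This finiteness, together with the transversality properties of $K$ inside each $T_i$, reduces the corner case to a tractable local analysis.
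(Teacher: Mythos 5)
Your proposal follows the paper's own route: both arguments rest on the decomposition of the stem into two contractible ``quadrants'' inside its Einstein torus (seen in the Minkowski patch of $P_\infty$) together with Corollary \ref{Cor:IntersectionNonContractible}, which forces $K=T_1\cap T_2$ out of the open stem, and on the observation that the lightlike points of $T_i$ are exactly the four quadrilateral photons, which lie in the wings. Where you go beyond the paper is in trying to upgrade ``$K$ meets $\partial\mathcal{S}_1$'' to the literal conclusion that an \emph{open} stem of one surface meets a wing of the other, via the first-exit-time argument along a component $\kappa$ of $K$; the paper's proof stops at ``$K$ must intersect the boundary of the stem, which is part of the wings'' and does not address whose stem the exit point lies in.

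The genuine soft spot is exactly the step you flag and then wave at: the ``corner case'' $\gamma(t_0)\in\partial\mathcal{S}_1\cap\partial\mathcal{S}_2$. Your proposed fix --- a ``generic choice of $p$'' or a ``short local perturbation along $\kappa$'' --- is not an argument: $p$ is an arbitrary given point of $\mathcal{S}_1\cap\mathcal{S}_2$, the exit point is dictated by the geometry rather than by any choice, and moving past $t_0$ may land you outside both closed stems; moreover, in degenerate configurations (e.g.\ two surfaces sharing boundary photons, or the case $T_1=T_2$ with coinciding stems and opposite wings, which you defer without examination) no perturbation can produce an open-stem-meets-wing point at all. The clean way out is to accept what the corner case actually gives you: a point of $\partial\mathcal{S}_1\cap\partial\mathcal{S}_2$ lies on a boundary photon of each quadrilateral, hence on a wing of $C_1$ \emph{and} on a wing of $C_2$, so in every case some wing of one surface meets the other surface. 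That weaker trichotomy is all that is used in the proof of Theorem \ref{CrookedSurfacesDisjointness} (where wings of each surface have already been shown disjoint from the whole other surface), and it is also the honest content of the paper's own sketch; proving the statement exactly as phrased would require either ruling out the corner case by a separate argument or restating the theorem accordingly.
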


\begin{lem}\label{lem:timelikeLightcones}
Let $p_0,p_\infty,p\in \Einth$ be three points in the Einstein universe. The point $p$ is timelike with respect to $p_0,p_\infty$ if and only if the intersection of the three light cones of $p,p_0,p_\infty$ is empty.
\begin{proof}
We work in the model of $\Einth$ given by lightlike lines in a vector space of signature $(3,2)$. If $p$ is timelike with respect to $p_0,p_\infty$, then it lies on a timelike curve which means that the subspace generated by $p,p_0,p_\infty$ has signature $(1,2)$. Therefore, its orthogonal complement is positive-definite and contains no lightlike vectors, so the intersection of the light cones is empty. The converse is similar.
\end{proof}
\end{lem}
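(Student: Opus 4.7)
The plan is to translate both sides of the equivalence into a statement about the signature of the $3$-plane $U := \mathrm{span}(\v{v}_0, \v{v}, \v{v}_\infty) \subset \W \cong \R^{3,2}$, where $\v{v}_0, \v{v}, \v{v}_\infty$ are null vectors representing $p_0, p, p_\infty$. Since $\Li([\v{n}]) = \Proj(\N(\v{n}^\perp))$ for any null $\v{n}$, the three-way intersection of light cones equals $\Proj(\N(U^\perp))$, which is empty precisely when the restricted form on $U^\perp$ carries no nonzero null vectors, equivalently when it is definite.

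Next I would run a short signature analysis of $U$. Since $U$ contains nonzero null vectors it cannot be positive definite, so if $U$ is a nondegenerate $3$-plane its signature is either $(2,1)$ or $(1,2)$, with orthogonal complements of signature $(1,1)$ (indefinite, has null vectors) or $(2,0)$ (positive definite, no null vectors) respectively. If $U$ is degenerate then its radical sits inside $U^\perp$ and already supplies a null vector; the low-dimensional cases $\dim U \le 2$ occur only when the three points are non-distinct or pairwise incident, and in each such case one checks directly that $U^\perp$ still contains null vectors. The conclusion is that the three light cones intersect trivially if and only if $U$ is nondegenerate of signature $(1,2)$.

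Finally I would identify the $(1,2)$ condition with timelikeness. By the trichotomy recalled in Section~\ref{minkowski_patches}, $p$ is timelike with respect to $p_0, p_\infty$ iff the three points lie on a common timelike circle, i.e.\ iff the null vectors $\v{v}_0, \v{v}, \v{v}_\infty$ all lie in some signature-$(1,2)$ subspace $U' \subset \W$. A brief observation, that any $2$-plane inside a $(1,2)$-space carries at most two null lines, forces three distinct null lines in such a $U'$ to span it, giving $U = U'$. Hence the existence of such a $U'$ is equivalent to $U$ itself having signature $(1,2)$, which closes the equivalence.

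The main piece of bookkeeping, and the only slightly delicate step, is the signature analysis of the second paragraph: ruling out a positive-definite $U$ using the presence of null vectors, and handling the degenerate and low-dimensional configurations uniformly. Once these cases are dispatched, the whole argument reduces to the signature-reversing behaviour of $(\cdot)^\perp$ inside the fixed $(3,2)$ ambient form.
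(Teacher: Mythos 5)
Your proposal is correct and follows essentially the same route as the paper: reduce both conditions to the signature of the span $U$ of the three null lines, and use that $U$ has signature $(1,2)$ exactly when $U^\perp$ is positive definite, hence free of null vectors. The only difference is that you spell out the converse (the nondegenerate $(2,1)$, degenerate, and low-dimensional cases) which the paper dismisses with ``the converse is similar,'' so this is a welcome but not essentially new elaboration.
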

\begin{lem}\label{lem:PhotonDisjointness}
A photon represented by a vector $\v{p}\in\V$ is disjoint from the crooked surface $C$ if and only if the following two inequalities are satisfied:
\[\omega(\v{p},\v{v}_+)\omega(\v{p},\v{u}_+)>0\]
\[\omega(\v{p},\v{v}_-)\omega(\v{p},\v{u}_-)<0.\]
\begin{proof}
Write $\v{p}$ in the basis $\v{u}_+,\v{u}_-,\v{v}_+,\v{v}_-$ :
\[\v{p} = a \v{u}_+ + b \v{u}_- + c \v{v}_+ + d \v{v}_-.\]
Then,
\[a=\omega(\v{p},\v{v}_-) \quad b=\omega(\v{p},\v{v}_+)\]
\[c=-\omega(\v{p},\v{u}_-) \quad d=-\omega(\v{p},\v{u}_+).\]
The photon $\v{p}$ is disjoint from $\mathcal{W}_+$ if and only if the following equation has no solutions with $ts\geq 0$:
\[\omega(\v{p}, t \v{u}_+ + s \v{v}_+) = 0.\]
This happens exactly when $b d < 0$.
Similarly, $\v{p}$ is disjoint from $\mathcal{W}_-$ if and only if $a c >0$. These two equations are equivalent to the ones in the statement of the Lemma, therefore it remains only to show that under these conditions, $\v{p}$ is disjoint from the stem.

The Lagrangian plane $P$ representing the intersection of $\v{p}$ with the Einstein torus containing the stem is generated by $\v{p}$ and $a \v{u}_+ + d \v{v}_-$. This is because $a \v{u}_+ + d\v{v}_-$ represents the unique photon in one of the foliations of the Einstein torus which intersects the photon $\v{p}$, and hence the span $\R\v{p} + \R(a \v{u}_+ + d \v{v}_-)$ is their intersection point, the Lagrangian $P$. We want to show that $P$ is not timelike with respect to $P_0,P_\infty$. By Lemma \ref{lem:timelikeLightcones}, this is equivalent to showing that the triple intersection of the lightcones of $P_0$, $P$, and $P_\infty$ is non-empty.

The intersection of the light cones of $P_0$ and $P_\infty$ consists of planes of the form: $\R(s \v{u}_+ + t \v{u}_-) + \R( s' \v{v}_+ + t' \v{v}_-)$ where $st' + ts'=0$. We want to show that no point represented by such a plane is incident to $P$. Two Lagrangian planes are incident when their intersection is a non-zero subspace. Equivalently, they are incident if they do not span $\V$. We have :
\begin{align*}
\det(\v{p},a \v{u}_+ + d \v{v}_-, s \v{u}_+ + t \v{u}_-, s'\v{v}_+ + t' \v{v}_-)\\ = (-bdss' + catt')\det(\v{u}_+,\v{u}_-,\v{v}_+,\v{v}_-)\\
 = k(bds^2 + act^2)\det(\v{u}_+,\v{u}_-,\v{v}_+,\v{v}_-),
\end{align*}
where $t'=kt, s'=-ks$, $k\neq 0$. There exist $t,s$ making this determinant vanish because $bd,ac$ have different signs. This means that the point where $\v{p}$ intersects the Einstein torus containing the stem is not timelike and therefore outside the stem.
\end{proof}
\end{lem}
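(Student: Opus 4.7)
The plan is to show the two inequalities are equivalent to disjointness of $\v{p}$ from each of the three pieces of $C$: the wings $\mathcal{W}_\pm$ and the stem $\mathcal{S}$. I will write $\v{p} = a\v{u}_+ + b\v{u}_- + c\v{v}_+ + d\v{v}_-$ and, using the prescribed symplectic pairings between the basis vectors, read off $a = \omega(\v{p},\v{v}_-)$, $b = \omega(\v{p},\v{v}_+)$, $c = -\omega(\v{p},\v{u}_-)$, $d = -\omega(\v{p},\v{u}_+)$. The two inequalities then translate into the cleaner conditions $bd < 0$ and $ac > 0$.

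For the wings, the key observation is that two photons in $\Einth$ share a point exactly when their representative vectors in $\V$ lie in a common Lagrangian, equivalently when they are $\omega$-orthogonal. Hence $\v{p}$ meets $\mathcal{W}_+$ iff $t\omega(\v{p},\v{u}_+) + s\omega(\v{p},\v{v}_+) = 0$ has a nontrivial solution with $ts \ge 0$. A short sign analysis (taking care of the boundary cases where $\omega(\v{p},\v{u}_+)$ or $\omega(\v{p},\v{v}_+)$ vanishes) shows that this fails precisely when $bd < 0$. The parallel analysis for $\mathcal{W}_-$ (with $ts \le 0$) yields $ac > 0$. In particular, the two inequalities are \emph{necessary} for disjointness from $C$, and it remains only to prove sufficiency, i.e.\ that the wing conditions also force $\v{p}$ to miss $\mathcal{S}$.

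Assume both inequalities hold. I will first identify the point at which the photon $\v{p}$ crosses the Einstein torus containing the stem. A short computation using the basis expansion shows that the unique (up to scalar) vector of $S_1 = \R\v{u}_+ + \R\v{v}_-$ that is $\omega$-orthogonal to $\v{p}$ is $a\v{u}_+ + d\v{v}_-$, so that intersection point corresponds to the Lagrangian $P := \R\v{p} + \R(a\v{u}_+ + d\v{v}_-)$. To show $P$ lies outside $\mathcal{S}$, I invoke the preceding lemma, which says $P$ is not timelike with respect to $P_0, P_\infty$ iff one can exhibit a Lagrangian simultaneously incident to $P, P_0$, and $P_\infty$.

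Such a witness Lagrangian has the form $\R(s\v{u}_+ + t\v{u}_-) + \R(s'\v{v}_+ + t'\v{v}_-)$ with the Lagrangian constraint $st' + ts' = 0$, which I will implement by the substitution $s' = -ks$, $t' = kt$ with $k \neq 0$. Incidence with $P$ means these two vectors together with $\v{p}$ and $a\v{u}_+ + d\v{v}_-$ are linearly dependent, i.e.\ the $4 \times 4$ determinant in the basis $\v{u}_\pm, \v{v}_\pm$ vanishes. The main obstacle is carrying out this determinant expansion cleanly; I expect it to collapse to a single homogeneous quadratic of the form $bds^2 + act^2 = 0$. Once that reduction is in hand, the sign conditions $bd < 0$ and $ac > 0$ force the two terms to have opposite signs, guaranteeing a real solution $(s,t)$ and hence a witness Lagrangian. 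This shows $P$ is not timelike, so $\v{p}$ misses $\mathcal{S}$ as required.
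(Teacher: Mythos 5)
Your proposal is correct and follows essentially the same route as the paper: the same coefficient extraction $a,b,c,d$, the same incidence analysis reducing wing-disjointness to $bd<0$ and $ac>0$, the same identification of the unique torus intersection point $P=\R\v{p}+\R(a\v{u}_++d\v{v}_-)$, and the same use of the three-light-cone lemma with a witness Lagrangian found by making the $4\times4$ determinant vanish (your anticipated reduction to $k(bds^2+act^2)$ is exactly what the paper computes, so the deferred expansion is only a routine verification). You even state the witness-existence logic more cleanly than the paper's prose, which momentarily says ``no point \ldots is incident to $P$'' before exhibiting one.
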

\begin{thm}\label{CrookedSurfacesDisjointness}
Two crooked surfaces $C,C'$ given respectively by the configurations $\v{u}_+,\v{u}_-,\v{v}_+,\v{v}_-$ and $\v{u}'_+,\v{u}'_-,\v{v}'_+,\v{v}'_-$ are disjoint if and only if the four photons $\v{u}'_+,\v{u}'_-,\v{v}'_+,\v{v}'_-$ do not intersect $C$ and the four photons $\v{u}_+,\v{u}_-,\v{v}_+,\v{v}_-$ do not intersect $C'$.
\end{thm}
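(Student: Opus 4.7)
The ``only if'' direction is immediate: the boundary photons of $C$'s stem lie in $C$, and vice versa. For the substantive ``if'' direction, the plan is to upgrade the eight-photon hypothesis into the stronger statement that \emph{every} photon foliating either wing of $C$ is disjoint from $C'$ (and symmetrically for $C'$), and then to combine this with Theorem~\ref{nostemonly} to rule out a purely stem--stem intersection. First I would invoke Lemma~\ref{lem:PhotonDisjointness} eight times, applying it to each of the four boundary photons $\v{u}_\pm, \v{v}_\pm$ of $C$ (viewed as disjoint from $C'$) and to each of the four boundary photons $\v{u}'_\pm, \v{v}'_\pm$ of $C'$ (viewed as disjoint from $C$). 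This produces sixteen strict sign inequalities involving the sixteen symplectic pairings $\omega(\v{x}, \v{y}')$ between the vectors defining $C$ and those defining $C'$.

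The heart of the proof is then to show that every photon of the wing $\mathcal{W}_+$ of $C$, parametrized as $[\v{p}] = [t\v{u}_+ + s\v{v}_+]$ with $ts \geq 0$, is disjoint from $C'$. Expanding the two inequalities of Lemma~\ref{lem:PhotonDisjointness} applied to $[\v{p}]$ against $C'$ produces products of the form $(t\alpha_1 + s\beta_1)(t\alpha_2 + s\beta_2)$, whose coefficients $\alpha_i, \beta_i$ are among the sixteen pairings of the first step. Using skew-symmetry of $\omega$ to translate the hypotheses from $C'$'s boundary photons (for instance, $\omega(\v{v}'_+, \v{v}_+)\omega(\v{v}'_+, \v{u}_+) > 0$) into sign relations on the $\alpha_i, \beta_i$, I expect to conclude that the four coefficients entering the ``positive'' inequality share a single sign, while those entering the ``negative'' inequality split into two pairs of opposite signs in exactly the pattern needed. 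For $t, s \geq 0$ not both zero, each linear combination then has a definite sign, and the two product inequalities hold. Parallel bookkeeping with $t, s$ of opposite signs handles $\mathcal{W}_-$ (where $ts \leq 0$), and swapping the roles of $C$ and $C'$ handles the wings of $C'$.

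With this in hand, suppose for contradiction that $p \in C \cap C'$. If $p$ lies in a wing of $C$, then $p$ lies on some photon foliating that wing, and that photon is disjoint from $C'$ by the previous step, contradicting $p \in C'$. By symmetry $p$ cannot lie in a wing of $C'$ either, so $p$ lies simultaneously in both stems. But then the stems meet, and Theorem~\ref{nostemonly} forces a wing of one surface to intersect the other, contradicting what we have just shown.

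I expect the main obstacle to be the sign bookkeeping in the middle step: the sixteen boundary inequalities must lock together into precisely the pattern claimed. What makes this work, rather than being a lucky alignment, is that the two halves of the hypothesis complement each other: the inequalities from $C$'s photons disjoint from $C'$ control the ``diagonal'' products of pairings ($\alpha_1\alpha_2$ and $\beta_1\beta_2$), while those from $C'$'s photons disjoint from $C$ supply the ``cross'' products ($\alpha_1\beta_1$ and $\alpha_2\beta_2$) which are invisible from the $C$-side alone. It is exactly this complementarity that forces the required common-sign or alternating-sign patterns on the four coefficients.
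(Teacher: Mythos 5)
Your proposal is correct and follows essentially the same route as the paper: apply Lemma~\ref{lem:PhotonDisjointness} to the eight boundary photons, expand the two disjointness inequalities for a general wing photon $[t\v{u}_\pm + s\v{v}_\pm]$ and use the complementary sign relations (diagonal products from $C$'s photons versus cross products from $C'$'s photons) to show every wing photon of each surface misses the other, then invoke Theorem~\ref{nostemonly} to exclude a stem--stem intersection. The only cosmetic difference is that you argue factor-by-factor (each linear combination has a definite sign) while the paper argues term-by-term in the expanded quadratic form; these are equivalent given the common-sign pattern you identify.
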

\begin{proof}
Since the four photons on the boundary of the stem are part of the crooked surface,  the forward implication is clear.

We now show the reverse implication. Assume that the four photons $\v{u}'_+,\v{u}'_-,\v{v}'_+,\v{v}'_-$ do not intersect $C$ and the four photons $\v{u}_+,\v{u}_-,\v{v}_+,\v{v}_-$ do not intersect $C'$. Let us first show that the wing $\mathcal{W}_+$ of $C$ does not intersect $C'$. By lemma \ref{lem:PhotonDisjointness}, it suffices to show that
\[\omega(t\v{u}_+ + s\v{v}_+, \v{v}'_+)\omega(t\v{u}_+ + s\v{v}_+, \v{u}'_+) > 0 \]
and
\[\omega(t\v{u}_+ + s\v{v}_+, \v{v}'_-)\omega(t\v{u}_+ + s\v{v}_+, \v{u}'_-) < 0 \]
for all $s,t\in \R$ such that $st\geq 0$ (with $s$ and $t$ not both zero).

We have
\begin{align*}
&\omega(t\v{u}_+ + s\v{v}_+, \v{v}'_+)\omega(t\v{u}_+ + s\v{v}_+, \v{u}'_+) \\
&=t^2\omega(\v{u}_+,\v{v}'_+)\omega(\v{u}_+,\v{u}'_+) + st\omega(\v{u}_+,\v{v}'_+)\omega(\v{v}_+,\v{u}'_+)\\ &\qquad + st\omega(\v{v}_+,\v{v}'_+)\omega(\v{u}_+,\v{u}'_+) + s^2\omega(\v{v}_+,\v{v}'_+)\omega(\v{v}_+,\v{u}'_+).
\end{align*}
By hypothesis, neither $\v{u}_+, \v{v}_+$ intersect $C'$, and neither $\v{u}'_+, \v{v}'_+$ intersect $C$. Therefore, using again lemma \ref{lem:PhotonDisjointness} and $st\geq 0$, we see that each term in this sum is non-negative and that at least one of them must be strictly positive. Therefore,
\[\omega(t\v{u}_+ + s\v{v}_+, \v{v}'_+)\omega(t\v{u}_+ + s\v{v}_+, \v{u}'_+) > 0. \]
The proof that
\[\omega(t\v{u}_+ + s\v{v}_+, \v{v}'_-)\omega(t\v{u}_+ + s\v{v}_+, \v{u}'_-) < 0 \]
is similar. Therefore, $\mathcal{W}_+$ does not intersect $C'$.

In an analogous way, one can show that $\mathcal{W}_-$ does not intersect $C'$. Therefore, the wings of the crooked surface $C$ do not intersect $C'$. Hence, to show that $C$ and $C'$ are disjoint, it only remains to show that the stem of $C$ does not intersect $C'$.

By symmetry, the wings of $C'$ do not intersect $C$, which means in particular that they do not intersect the stem of $C$. Consequently, the stem of $C$ can only intersect the stem of $C'$. However, according to theorem \ref{nostemonly}, if the stem of $C$ intersects the stem of $C'$, it must necessarily intersect its wings as well, which is not the case here. Therefore, we conclude that $C$ and $C'$ must be disjoint.
\end{proof}
By lemma \ref{lem:PhotonDisjointness}, this disjointness criterion can be expressed explicitly as $16$ inequalities (two for each of the $8$ photons defining the two crooked surfaces). There is some redundancy in these inequalities, but there does not seem to be a natural way to reduce the system.

\section{Anti-de Sitter crooked planes}
\label{sec:AdS}

In this section, we show that the criterion for disjointness of \emph{anti-de Sitter} crooked planes described in \cite{DGKfundamentald} is a special case of theorem \ref{CrookedSurfacesDisjointness}, when embedding the double cover of anti-de Sitter space in the Einstein universe.

The $3$-dimensional Anti-de Sitter space, denoted $\AdS$, is the manifold $\PSL(2,\R) \cong \Isom(\Ht)$ endowed with the bi-invariant Lorentzian metric given by the Killing form. We now recall the definition of a (right) $\AdS$ crooked plane.

Let $\ell$ be a geodesic in the hyperbolic plane $\Ht$. The \emph{right} $\AdS$ \emph{crooked plane based at the identity} associated to $\ell$ is the set of $g\in \PSL(2,\R)$ such that $g$ has a nonattracting fixed point in $\overline{\ell} \subset \Ht \cup \partial \Ht$. In other words, the isometries $g\in \PSL(2,\R)$ which make up the crooked plane are :
\begin{itemize}
  \item elliptic elements centered on a point of $\ell$,
  \item parabolic elements with fixed point in $\partial \ell$, and
  \item  hyperbolic elements with \emph{repelling} fixed point in $\partial \ell$.
\end{itemize}

A right $\AdS$ crooked plane based at $g\in \PSL(2,\R)$ is a left-translate of one based at the identity. We will say that such a crooked plane is \emph{defined by the pair} $(g,\ell)$.

A \emph{left} $\AdS$ crooked plane is defined the same way, replacing \emph{nonattracting fixed point} by \emph{nonrepelling fixed point}. Since a right $\AdS$ crooked plane and a left $\AdS$ crooked plane always intersect, we will assume in what follows that all our $\AdS$ crooked planes are of the first type.

\begin{thm}[\cite{DGKfundamentald}, Theorem 3.2]\label{thm:dgkCriterion}
Let $\ell,\ell'$ be geodesic lines of $\Ht$ and $g\in \PSL(2,\R)$. Then, the right $\AdS$ crooked planes defined by $(I,\ell)$ and $(g,\ell')$ are disjoint if and only if for any endpoints $\xi$ of $\ell$ and $\xi'$ of $\ell'$, we have $\xi\neq \xi'$ and $d(\xi,g\xi')-d(\xi,\xi')<0$.
\end{thm}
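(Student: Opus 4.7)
The plan is to realize each anti-de Sitter crooked plane as the trace on $\hAdS^3$ of a crooked surface in $\Einth$, read off its lightlike quadrilateral from the endpoints of the defining geodesic, and then translate the inequalities of Lemma~\ref{lem:PhotonDisjointness} into the hyperbolic inequality of the statement.

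First I would set up the Lagrangian model. Following Goldman \cite{G14}, $\hAdS^3$ is the complement in $\Einth$ of an Einstein torus $T_\infty$, which in the symplectic language of \S\ref{sec:ssplit} corresponds to a splitting $\V = V_+ \oplus V_-$. Its two rulings $\Proj(V_+)$ and $\Proj(V_-)$ are canonically identified with $\partial_\infty\Ht = \R\mathsf{P}^1$, and points of $\hAdS^3$ are Lagrangian graphs of linear isomorphisms $V_+\to V_-$ of absolute determinant one, identifying $\hAdS^3$ with a double cover of $\PSL(2,\R)$ acting by isometries of $\Ht$. For the geodesic $\ell\subset\Ht$ with endpoints $\xi_\pm$, pick lifts $\v{v}_\pm\in V_+$ and dual lifts $\v{u}_\pm\in V_-$ of $\xi_\pm$, normalized so that $\omega(\v{u}_+,\v{v}_-)=\omega(\v{u}_-,\v{v}_+)=1$ and all other pairings vanish. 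These four vectors then form exactly the lightlike quadrilateral of the crooked surface $C$ extending the AdS crooked plane $(I,\ell)$; its stem torus corresponds to the splitting $(\R\v{u}_++\R\v{v}_-)\oplus(\R\v{u}_-+\R\v{v}_+)$, and the four boundary photons of the stem realize the endpoints $\xi_\pm$ as seen through the two rulings of $T_\infty$.

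For the second AdS crooked plane $(g,\ell')$ with endpoints $\xi'_\pm$, the analogous lightlike quadrilateral has $\v{v}'_\pm \in V_+$ lifting $\xi'_\pm$ together with $\v{u}'_\pm := g\cdot\v{v}'_\pm \in V_-$, the action of $g$ translating the configuration from the identity to the basepoint of $C'$. Theorem~\ref{CrookedSurfacesDisjointness} and Lemma~\ref{lem:PhotonDisjointness} then reduce the disjointness of $C$ and $C'$ to a finite list of sign conditions on the $\omega$-pairings between the eight vectors $\v{v}_\epsilon,\v{u}_\epsilon,\v{v}'_{\epsilon'},g\v{v}'_{\epsilon'}$ for $\epsilon,\epsilon'\in\{+,-\}$.

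The decisive step is the translation of these sign conditions into hyperbolic language. The key identity is
\[
\frac{|\omega(\v{x},g\v{y})|}{|\omega(\v{x},\v{y})|} \;=\; \exp\bigl(d(\xi,g\xi')-d(\xi,\xi')\bigr)
\]
for $\v{x}\in V_-$ and $\v{y}\in V_+$ lifting $\xi,\xi'\in\partial_\infty\Ht$ with the normalizations above, where $d$ is the horocyclic renormalized distance used in \cite{DGKfundamentald}; this is the Busemann cocycle evaluated in the linear model of $\Ht$. Substituting $\v{u}'_\pm=g\v{v}'_\pm$ into the inequalities of Lemma~\ref{lem:PhotonDisjointness}, each pair of sign inequalities collapses to a single strict inequality $d(\xi,g\xi')-d(\xi,\xi')<0$ indexed by an ordered pair $(\xi,\xi')$ of endpoints of $(\ell,\ell')$, and the hypothesis that the relevant $\omega$-pairings do not vanish amounts to $\xi\neq\xi'$. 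The main obstacle will be the sign and normalization bookkeeping in this final step: one must verify that the sixteen inequalities of Lemma~\ref{lem:PhotonDisjointness} collapse uniformly and without redundancy to exactly the four inequalities stated in the theorem, and that the horocyclic normalization implicit in $d$ agrees with the normalization forced on $\v{v}_\pm,\v{u}_\pm$ by the lightlike quadrilateral conditions.
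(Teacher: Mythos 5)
Your overall route is the same as the paper's (embed $\hAdS^3$ in $\Einth$ via Lagrangian graphs, apply Theorem \ref{CrookedSurfacesDisjointness} and Lemma \ref{lem:PhotonDisjointness}, then translate the resulting sign conditions into the horocyclic inequality), but the step on which everything hinges --- identifying the lightlike quadrilateral of the crooked surface extending $(I,\ell)$ --- is wrong, and in fact the configuration you propose does not exist. With $\V=\V_0\oplus\V_0$ and $\omega=\omega_0\oplus(-\omega_0)$, the summands $V_\pm$ of the splitting defining the boundary torus are \emph{symplectic} $2$-planes, not Lagrangians: for linearly independent lifts $\v{v}_+,\v{v}_-\in V_+$ of the two endpoints one has $\omega(\v{v}_+,\v{v}_-)=\omega_0(\v{v}_+,\v{v}_-)\neq 0$, so your normalization ``all other pairings vanish'' cannot be satisfied, and the would-be vertex $\R\v{v}_++\R\v{v}_-=V_+$ is not Lagrangian, hence not a point of $\Einth$. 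Moreover, photons spanned by vectors of $V_\pm$ lie entirely inside the boundary torus, whereas the edge photons of a crooked surface adapted to an AdS patch join interior vertices to ideal vertices; and your recipe uses only the endpoints of $\ell$, never the basepoint, so it would assign the same data to crooked planes $(h,\ell)$ with different basepoints $h$. The correct quadrilateral (as in the paper) has vertices $\graph(I)$, $\graph(-I)$ and two ideal points, with edge photons spanned by $(\v{a},\v{a})$, $(-\v{a},\v{a})$, $(\v{b},\v{b})$, $(\v{b},-\v{b})$ for lifts $\v{a},\v{b}\in\V_0$ of the endpoints of $\ell$, and the quadrilateral at $g$ is its image under $I\oplus g$, with edges $(\v{a}',g\v{a}')$, $(-\v{a}',g\v{a}')$, etc.; your assignment $\v{u}'_\pm:=g\v{v}'_\pm\in V_-$ does not correspond to the $\SL(\V_0)\times\SL(\V_0)$ action.

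Because of this, the final translation cannot be carried out as you describe. With the correct quadrilaterals, the sixteen inequalities of Lemma \ref{lem:PhotonDisjointness} reduce, via the involution $I\oplus(-I)$ and the AdS symmetry, to four inequalities of the form $\omega_0(\v{a}',\v{b})^2>\omega_0(g\v{a}',\v{b})^2$, and the paper converts these to hyperbolic statements through the equivariant map $\eta(\v{a})=-\v{a}\v{a}^T J$ into the null cone of the Killing form, the identity $\omega_0(\v{a},\v{b})^2=-K(\eta(\v{a}),\eta(\v{b}))$, and an explicit horocycle-distance formula in the hyperboloid model. Your Busemann-cocycle identity is the right idea but is misnormalized (it is the \emph{square} of the ratio of pairings that equals the exponential of the distance difference), and, more importantly, it is applied to pairings that do not come from the actual crooked surfaces. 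Note also that the paper's argument as written establishes the implication from the pairing inequalities to the condition $d(\xi,g\xi')-d(\xi,\xi')<0$; a self-contained proof of the quoted equivalence along your lines would additionally require the reverse translation and an argument that disjointness of the AdS crooked planes is equivalent to disjointness of their crooked-surface closures in $\Einth$, neither of which your outline addresses.
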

In this criterion, the difference $d(p,gq)-d(p,q)$ for $p,q\in\partial \Ht$ is defined as follows : choose sufficiently small horocycles $C,D$ through $p,q$ respectively. Then, $d(p,gq)-d(p,q):=d(C,GD)-d(C,D)$ and this quantity is independent of the choice of horocycles.

\subsection{AdS as a subspace of Ein}
Let $\V_0$ be a real two dimensional symplectic vector space with symplectic form $\omega_0$. Denote by $\V$ the four dimensional symplectic vector space $\V=\V_0 \oplus \V_0$ equipped with the symplectic form $\omega = \omega_0 \oplus -\omega_0$. This vector space $\V$ will have the same role as in section \ref{sec:Symplectic}.

The Lie group $\Sp(\V_0)=\SL(\V_0)$ is a model for the double cover of anti-de Sitter $3$-space. We will show how to embed this naturally inside the Lagrangian Grassmannian model of the Einstein Universe in three dimensions. To do this, define
\[i : \SL(\V_0) \rightarrow \mathrm{Gr}(2,\V)\]
\[f \mapsto \graph(f)\]
The graph of $f \in \Sp(\V_0)$ is a Lagrangian subspace of $\V=\V_0\oplus \V_0.$
This means that $i(\SL(\V_0))\subset \Lag(\V) \cong \Einth$. This map is equivariant with respect to the homomorphism:
\[\SL(\V_0) \times \SL(\V_0) \rightarrow \Sp(\V)\]
\[ (A,B) \mapsto B \oplus A ,\]
where the action of $\SL(\V_0) \times \SL(\V_0)$ on $\SL(\V_0)$ is by $(A,B)\cdot X = AXB^{-1}$.
The involution of $\Einth$ induced by the linear map
\[I \oplus -I : \V_0\oplus \V_0 \mapsto \V_0 \oplus \V_0,\]
where $I$ denotes the identity map on $\V_0$, preserves the image of $i$. It corresponds to the two-fold covering $\SL(\V_0)\rightarrow \PSL(\V_0)$. The fixed points of this involution are exactly the complement of the image of $i$, corresponding to the conformal boundary of AdS (see section $2$ of \cite{G14} for details).
\subsection{Crooked surfaces and AdS crooked planes}

As in \cite{G14}, we say that a crooked surface is adapted to an AdS patch if it is invariant under the involution $I \oplus -I$. Goldman proves in \cite{G14} that a crooked surface is adapted to an $\AdS$ patch if and only if it is the closure in $\Ein{3}$ of a left or right $\AdS$ crooked plane in that patch. Moreover, two $\AdS$ crooked planes in the same patch are disjoint if and only if their closures in $\Ein{3}$ are disjoint.

 If a crooked surface is invariant under $I \oplus -I$, then its corresponding lightlike quadrilateral is invariant. Two of the opposite vertices are fixed (they lie on the boundary of AdS) and the two others are swapped. If we denote the four photons by $\v{u}_-,\v{u}_+,\v{v}_-,\v{v}_+$, this means $\v{v}_- = (I \oplus -I) \v{u}_-$ and $\v{v}_+ = (I \oplus -I) \v{u}_+$.

\subsubsection{AdS crooked planes based at the identity}
For concreteness, choose a basis of $\V$ to identify it with $\R^4$.
We will represent a plane in $\R^4$ by a $4 \times 2$ matrix whose columns generate the plane, up to multiplication on the right by an invertible $2\times2$ matrix. For example, $\graph(f)$ corresponds to the matrix:
\[\begin{pmatrix}I \\ f\end{pmatrix}.\]
The identity element of $\SL(\V_0)$ maps to the plane
\[\begin{pmatrix}I\\I\end{pmatrix}\]
and its image under the involution $I \oplus -I$ is
\[\begin{pmatrix}I\\-I\end{pmatrix}.\]
The intersection of the lightcones of the two Lagrangians $\graph(I)$ and $\graph(-I)$ consists of Lagrangians which have the form
\[\begin{pmatrix} v_1 & v_1\\v_2 & v_2\\v_1 & -v_1\\ v_2 & -v_2\end{pmatrix}\]
for some $v_1,v_2\in \R$ not both zero.

Therefore, the lightlike quadrilaterals containing as opposite vertices $\graph(I)$ and $\graph(-I)$ are parameterized by pairs of distinct nonzero vectors $\v{a},\v{b} \in \V_0$ ($2\times 1$ column vectors), up to projective equivalence. The four vertices of the lightlike quadrilateral are then:
\[\begin{pmatrix} I \\ I\end{pmatrix}, \begin{pmatrix}\v{a} & \v{a} \\ \v{a}  & -\v{a} \end{pmatrix}, \begin{pmatrix}\v{b} & \v{b} \\ \v{b} & -\v{b} \end{pmatrix}, \begin{pmatrix} I \\ -I \end{pmatrix}.\]
We will say that such a lightlike quadrilateral is based at $I$ and defined by the vectors $\v{a},\v{b}$. We choose as representatives of its lightlike edges the vectors:
\[ \v{v}_- = \begin{pmatrix}\v{a} \\ \v{a} \end{pmatrix}, \v{u}_- = \begin{pmatrix} -\v{a} \\ \v{a} \end{pmatrix}\]
\[ \v{v}_+ = \begin{pmatrix}\v{b} \\ \v{b} \end{pmatrix}, \v{u}_+ = \begin{pmatrix} \v{b} \\ -\v{b} \end{pmatrix}.\]

With the definition of the wings using the sign choices of section \ref{sec:crookedsurf}, we will see that the intersection of the associated crooked surface with the $\AdS$ patch is a \emph{right} $\AdS$ crooked plane.

Indeed, the definition of the photons foliating the wing $\mathcal{W}_+$ was
\[\mathcal{W}_+ = \{[t \v{u}_+ + s \v{v}_+] ~ | ~ t s \geq 0\}.\]

Suppose that the graph Lagrangian $\begin{pmatrix} I \\ f\end{pmatrix}$ for some $f\in \SL(2,\R)$ is on such a photon, equivalently that it contains a vector of the form
\[\begin{pmatrix} (t+s)\v{b}\\(t-s)\v{b}\end{pmatrix}\]
with $ts\geq 0$. This is equivalent to
\[f\v{b} = \left(\frac{t-s}{t+s}\right)\v{b}.\]
When $t s \geq 0$, the quantity $\left|\frac{t-s}{t+s}\right| \leq 1$, hence the point $[\v{b}]\in \partial \Ht$ is a nonattracting fixed point of $f$. By a similar calculation, we can show that $\mathcal{W}_-$ consists of elements $\graph(f)$ such that $f$ has a nonattracting fixed point at $[\v{a}]\in \partial \Ht$.

\subsubsection{AdS crooked planes based at $f$}
In order to get an AdS crooked plane based at a different point $f\in \SL(\V_0)$, we map the crooked plane by an element of the isometry group $\SL(\V_0) \times \SL(\V_0) \subset \Sp(\V)$. The easiest way is to use an element of the form :
\[\begin{pmatrix}I & 0 \\ 0 & f \end{pmatrix}.\]
This corresponds to left multiplication by $f$ in $\SL(\V)$.

Applying $f$ to a lightlike quadrilateral, we get a lightlike quadrilateral with vertices of the form:
\[\begin{pmatrix} I \\ f \end{pmatrix},  \begin{pmatrix} I \\ -f \end{pmatrix} , \begin{pmatrix} \v{a} & -\v{a} \\ f\v{a} & f\v{a} \end{pmatrix},
\begin{pmatrix} \v{b} & \v{b} \\ f\v{b} & -f\v{b} \end{pmatrix} \]
and edges of the form:
\[\begin{pmatrix} \v{a} \\ f\v{a} \end{pmatrix}, \begin{pmatrix} -\v{a} \\ f\v{a} \end{pmatrix}\]
\[\begin{pmatrix} \v{b} \\ f\v{b} \end{pmatrix}, \begin{pmatrix} \v{b} \\ -f\v{b} \end{pmatrix}.\]
\subsection{Disjointness}
The disjointness criterion for crooked surfaces in the Einstein Universe is given by $16$ inequalities. Using the symmetries imposed by an AdS patch, we can reduce them to $4$ inequalities.

Using the involution defining the AdS patch, we can immediately reduce the number of inequalities by half. This is because both surfaces are preserved by the involution, and their defining photons are swapped in pairs. (So for example, we only have to check that $\v{u}_+$ and $\v{u}_-$ are disjoint from the other surface, for each surface.)

The second reduction comes from the fact that for AdS crooked planes, we only need to check that the four photons from the first crooked surface are disjoint from the second, and then the four from the second are automatically disjoint from the first.

For a crooked surface based at the identity with lightlike quadrilateral defined by the vectors $\v{a},\v{b}\in \V_0$ and another based at $f$ with quadrilateral defined by $\v{a}',\v{b}'\in \V_0$, the inequalities reduce to:
\[\omega_0(\v{a}',\v{b})^2 > \omega_0(f\v{a}',\v{b})^2\]
\[\omega_0(\v{a}',\v{a})^2 > \omega_0(f\v{a}',\v{a})^2\]
\begin{equation}\omega_0(\v{b}',\v{b})^2 > \omega_0(f\v{b}',\v{b})^2 \label{fourineqs}\end{equation}
\[\omega_0(\v{b}',\v{a})^2 > \omega_0(f\v{b}',\v{a})^2.\]
What remains is to interpret these four inequalities in terms of hyperbolic geometry. We first define an equivariant map from $\mathbb{P}(\V_0)$ to $\partial \mathbb{H}^2$. As a model of the boundary of $\mathbb{H}^2$, we use the projectivized null cone for the Killing form in $\mathfrak{sl}(\V_0)\cong \mathfrak{sl}(2,\R)$. Choose a basis of $\V_0$ in which $\omega_0$ is given by the matrix $J=\begin{pmatrix}0 & 1\\ -1 & 0\end{pmatrix}$ and define
\begin{align*}
  \eta : \V_0 &\rightarrow \N(\mathfrak{sl}(2,\R))\\
        \v{a} &\mapsto -\v{a}\v{a}^T J,
\end{align*}
where $\v{a}$ is a column vector representing a point in $\mathbb{P}(\V_0)$. This map associates to the vector $\v{a}$ the tangent vector to at identity of the photon between $I$ and the boundary point $ \begin{pmatrix}\v{a} & \v{a} \\ \v{a}  & -\v{a} \end{pmatrix}$. Note that the image of $\eta$ is contained in the upper part of the null cone.

\begin{lem}
$\eta$ is equivariant with respect to the action of $\SL(\V_0)$.
\begin{proof}
\[\eta(A\v{a}) = -A\v{a}(A\v{a})^TJ = -A\v{a}\v{a}^T A^T J = -A \v{a}\v{a}^T J A^{-1} = A \eta(\v{a}) A^{-1}.\]
\end{proof}
\end{lem}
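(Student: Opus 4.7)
The plan is a direct matrix calculation based on the defining relation of $\Sp(\V_0) = \SL(\V_0)$, together with the observation that the induced action of $\SL(\V_0)$ on the space of boundary points $\partial \Ht$, modelled as the projectivized null cone in $\mathfrak{sl}(2,\R)$ with respect to the Killing form, is simply the adjoint action $X \mapsto AXA^{-1}$. So the lemma reduces to the identity $\eta(A\v{a}) = A\,\eta(\v{a})\,A^{-1}$ for every $A \in \SL(\V_0)$ and every nonzero column vector $\v{a} \in \V_0$.

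First, I would unfold the definition to write
\[ \eta(A\v{a}) = -(A\v{a})(A\v{a})^T J = -A\,\v{a}\v{a}^T\,A^T J, \]
where $J$ is the matrix of $\omega_0$ in the chosen basis. The next step is to invoke the defining symplectic relation for $A \in \Sp(\V_0)$, namely $A^T J A = J$, which is equivalent to $A^T J = J A^{-1}$. Substituting this on the right-hand side gives
\[ \eta(A\v{a}) = -A\,\v{a}\v{a}^T J\,A^{-1} = A\,(-\v{a}\v{a}^T J)\,A^{-1} = A\,\eta(\v{a})\,A^{-1}, \]
which is exactly equivariance once we recall that the $\SL(\V_0)$-action on $\mathfrak{sl}(2,\R)$ is by conjugation.

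I do not anticipate any serious obstacle. The only bookkeeping concerns are checking that $-\v{a}\v{a}^T J$ indeed lies in $\mathfrak{sl}(2,\R)$ (it is traceless because $\Tr(\v{a}\v{a}^T J) = \v{a}^T J \v{a} = \omega_0(\v{a},\v{a}) = 0$) and that it is Killing-null (the matrix $\v{a}\v{a}^T J$ has rank one, hence square zero up to a scalar, so its Killing norm vanishes). Both facts are immediate and justify that $\eta$ does land in $\N(\mathfrak{sl}(2,\R))$ to begin with, so the computation above is the entire content of the lemma.
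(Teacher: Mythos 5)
Your computation is exactly the paper's proof: unfold $\eta(A\v{a}) = -A\v{a}\v{a}^T A^T J$ and use the symplectic relation $A^T J = J A^{-1}$ to obtain $A\,\eta(\v{a})\,A^{-1}$. The extra checks that $-\v{a}\v{a}^T J$ is traceless and Killing-null are correct but supplementary; the core argument coincides with the paper's.
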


We will denote by $K$ the \emph{trace form} on $\mathfrak{sl}(2,\R)$
\[K(X,Y) = \Tr(XY).\]
Its value is $\frac18$ times the Killing form.

\begin{lem}
Let $\v{a},\v{b} \in \V_0$. Then,
$\omega_0(\v{a},\v{b})^2 = -K(\eta(\v{a}),\eta(\v{b}))$.
\begin{proof}
\begin{align*}
\omega_0(\v{a},\v{b})^2 &= -\v{a}^T J \v{b} \v{b}^T J \v{a}\\
                         &= \v{a}^T J \eta(\v{b}) \v{a}\\
                         &= \Tr( \v{a}^T J \eta(\v{b}) \v{a})\\
                         &= \Tr(\v{a} \v{a}^T J \eta(\v{b}))\\
                         &= -\Tr(\eta(\v{a})\eta(\v{b}))\\
                         &= -K(\eta(\v{a}),\eta(\v{b})). \qedhere
\end{align*}
\end{proof}
\end{lem}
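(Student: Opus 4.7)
The plan is to unpack both sides using the explicit matrix formulas $\omega_0(\v{a},\v{b}) = \v{a}^T J \v{b}$ and $\eta(\v{a}) = -\v{a}\v{a}^T J$, and then reduce the identity to a matter of cyclic shifts under the trace, identifying $K(X,Y)$ with $\Tr(XY)$ in the paper's normalization of the Killing form on $\mathfrak{sl}(2,\R)$.

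First I would rewrite $\omega_0(\v{a},\v{b})^2$ as a product of two scalars $(\v{a}^T J \v{b})(\v{a}^T J \v{b})$, and then exploit the antisymmetry of $J$ (namely $\v{a}^T J \v{b} = -\v{b}^T J \v{a}$) to bring this into the asymmetric form $-(\v{a}^T J \v{b})(\v{b}^T J \v{a})$. This is the key algebraic rearrangement that makes the expression look like a trace of two copies of $\eta$: the inner factor $\v{b}\v{b}^T J$ is already essentially $-\eta(\v{b})$.

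Next, I would pass to a trace: since the scalar $\v{a}^T J \v{b} \v{b}^T J \v{a}$ is its own trace, cyclicity gives
\[
-\Tr\bigl(\v{a}^T J \v{b} \v{b}^T J \v{a}\bigr) = -\Tr\bigl(\v{a}\v{a}^T J \cdot \v{b}\v{b}^T J\bigr) = -\Tr\bigl(\eta(\v{a})\eta(\v{b})\bigr),
\]
using the two sign contributions from $\eta$ to cancel. Finally I would invoke the convention $K(X,Y) = \Tr(XY)$ for the Killing form implicit in the paper (the Killing form on $\mathfrak{sl}(2,\R)$ is a positive multiple of $\Tr(XY)$, and the scaling is fixed by the requirement that $\eta$ land in the null cone of $K$, which determines the identification).

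The only real obstacle is bookkeeping of signs and normalization: one must check that the antisymmetry of $J$ contributes exactly one sign, that the two copies of the minus sign in $\eta(\v{a}) = -\v{a}\v{a}^T J$ combine to $+1$, and that the Killing form convention in use matches $\Tr(XY)$ rather than $4\Tr(XY)$ or $2\Tr(XY)$. None of these is a genuine difficulty, but getting them all simultaneously correct is the only thing that could go wrong.
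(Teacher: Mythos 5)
Your proposal is correct and follows the paper's proof essentially verbatim: square the symplectic pairing, use antisymmetry of $J$ to produce the sign, recognize the scalar as its own trace, cycle to assemble $\eta(\v{a})\eta(\v{b})$, and identify $K$ with the trace form. The only small slip is your claim that the null-cone condition fixes the normalization of $K$ (the null cone is invariant under rescaling, so it does not); like the paper, you are simply adopting the convention $K(X,Y)=\Tr(XY)$, which is harmless since only the sign of $K$ matters in the subsequent inequalities.
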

Note that the expression $\omega_0(\v{a},\v{b})$ is not projectively invariant, but the sign of $\omega_0(\v{a},\v{b})^2 - \omega_0(\v{a}, f\v{b})^2$ is.
\begin{cor}
The following inequalities are equivalent
\[\omega_0(\v{a},\v{b})^2 - \omega_0(\v{a},f\v{b})^2>0,\] \[K(\eta(\v{a}),f\eta(\v{b})f^{-1}) > K(\eta(\v{a}),\eta(\v{b})).\]
\end{cor}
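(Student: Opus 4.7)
The plan is to deduce this corollary directly from the previous two lemmas, which together express the squared symplectic form in terms of the Killing form and control how each ingredient transforms under the action of $\SL(\V_0)$.

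First I would apply the second lemma to each of the two terms $\omega_0(\v{a},\v{b})^2$ and $\omega_0(\v{a},f\v{b})^2$, obtaining
\[
\omega_0(\v{a},\v{b})^2 \;=\; -K\bigl(\eta(\v{a}),\eta(\v{b})\bigr),
\qquad
\omega_0(\v{a},f\v{b})^2 \;=\; -K\bigl(\eta(\v{a}),\eta(f\v{b})\bigr).
\]
Next I would use the equivariance lemma to rewrite $\eta(f\v{b}) = f\,\eta(\v{b})\,f^{-1}$, so that
\[
\omega_0(\v{a},f\v{b})^2 \;=\; -K\bigl(\eta(\v{a}),\,f\,\eta(\v{b})\,f^{-1}\bigr).
\]

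Subtracting gives
\[
\omega_0(\v{a},\v{b})^2 - \omega_0(\v{a},f\v{b})^2 \;=\; K\bigl(\eta(\v{a}),\,f\,\eta(\v{b})\,f^{-1}\bigr) \;-\; K\bigl(\eta(\v{a}),\eta(\v{b})\bigr),
\]
from which the equivalence of the two inequalities is immediate: the left-hand side is positive precisely when $K(\eta(\v{a}),f\eta(\v{b})f^{-1}) > K(\eta(\v{a}),\eta(\v{b}))$.

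There is essentially no obstacle here; the corollary is a direct algebraic consequence of the identity furnished by the preceding lemma and the equivariance of $\eta$. The only point worth a brief remark is the one flagged just before the statement, namely that although $\omega_0(\v{a},\v{b})$ itself depends on the choice of representatives of $[\v{a}],[\v{b}]\in\Proj(\V_0)$, the sign of the difference $\omega_0(\v{a},\v{b})^2 - \omega_0(\v{a},f\v{b})^2$ is well defined, and so the resulting inequality on the Killing form is intrinsic to the boundary points $\eta(\v{a}),\eta(\v{b})\in\partial\Ht$.
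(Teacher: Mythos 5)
Your argument is correct and is exactly the intended one: the paper states this corollary without proof, as an immediate consequence of the two preceding lemmas, and your combination of the identity $\omega_0(\v{a},\v{b})^2=-K(\eta(\v{a}),\eta(\v{b}))$ with the equivariance $\eta(f\v{b})=f\eta(\v{b})f^{-1}$ is precisely that argument. Your closing remark about the sign of the difference being projectively well defined matches the paper's own comment preceding the statement.
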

Finally, we want to show that the four inequalities (\ref{fourineqs}) are equivalent to the DGK criterion (Theorem \ref{thm:dgkCriterion}).

Let $A,B,A',B'$ denote respectively $\eta(\v{a}),\eta(\v{b}),\eta(\v{a}'),\eta(\v{b}').$ Then, $A$, $B$, $A'$, $B'$ represent endpoints of two geodesics $g,g'$ in the hyperbolic plane. We want to show
\[d(\xi,f\xi'f^{-1}) - d(\xi,\xi')<0\]
for $\xi\in\{A,B\}$ and $\xi'\in\{A',B'\}$.

We use the hyperboloid model of $\Ht$, $\{X\in \mathfrak{sl}(2,\R)~|~K(X,X)=-1\}$. Consider horocycles $C_\xi(r)=\{X\in \Ht ~|~ K(X,\xi)=-r\}$ and $C_{\xi'}(r') = \{X\in \Ht ~|~ K(X,\xi')=-r'\}$ at $\xi$ and $\xi'$ respectively. The distance between these two horocycles is given by the formula
\[d(C_\xi(r),C_{\xi'}(r'))=\mathrm{arccosh}\left(-\frac12\left(\frac{K(\xi,\xi')}{2rr'} + \frac{2rr'}{K(\xi,\xi')}\right)\right).\]
Similarly,
\[d(C_\xi(r),fC_{\xi'}(r')f^{-1})=\mathrm{arccosh}\left(-\frac12\left(\frac{K(\xi,f\xi'f^{-1})}{2rr'}+\frac{2rr'}{K(\xi,f\xi'f^{-1})}\right)\right).\]
We know that $K(\xi,f\xi'f^{-1})>K(\xi,\xi')$. If $r,r'$ are sufficiently small, by increasingness of the function $x\mapsto x+\frac1x$ for $x>1$ and increasingness of $\mathrm{arccosh}$ we conclude $d(C_\xi(r),C_{\xi'}(r'))>d(C_\xi(r),fC_{\xi'}(r'))$, which is what we wanted.
\bibliographystyle{amsplain}
\bibliography{EinsteinTori}

\end{document}